\chardef\bslash=`\\ 
\theoremstyle{plain} 
\newtheorem{theorem}{Theorem}[section]
\theoremstyle{corollary}
\newtheorem{corollary}[theorem]{Corollary}
\theoremstyle{lemma}
\newtheorem{lemma}[theorem]{Lemma}
\theoremstyle{proposition}
\newtheorem{proposition}[theorem]{Proposition}
\theoremstyle{definition}
\newtheorem{definition}[theorem]{Definition}
\newtheorem{example}[theorem]{Example}
\newtheorem{examples}[theorem]{Examples}
\newtheorem{remark}[theorem]{Remark}
\numberwithin{equation}{section}
\newcommand{\Addresses}{{
  \medskip
  \footnotesize

  \textsc{Department of Mathematics, Alabama A\&M University}
  \par\nopagebreak
  \textsc{Huntsville, AL 35811, USA}  
  \par\nopagebreak
  \textit{E-mail address}: \texttt{zhenhua.wang@aamu.edu}}}
\DeclareMathOperator{\Adb}{{\mathbb A}}
\DeclareMathOperator{\Bdb}{{\mathbb B}}
\DeclareMathOperator{\Idb}{{\mathbb I}}
\DeclareMathOperator{\Rdb}{{\mathbb R}}
\DeclareMathOperator{\Al}{{\mathcal A}}
\DeclareMathOperator{\Fl}{{\mathcal F}}
\DeclareMathOperator{\Gl}{{\mathcal G}}
\DeclareMathOperator{\Hl}{{\mathcal H}}
\DeclareMathOperator{\Ul}{{\mathcal U}}
\DeclareMathOperator{\Afk}{{\mathfrak A}}
\newcommand{\Sp}[1]{\operatorname{Sp}(#1)}
\newcommand{\abs}[1]{\left\vert#1\right\vert}
\newcommand{\norm}[1]{\left\Vert#1\right\Vert}
\newcommand{\eval}[2][\right]{\relax
  \ifx#1\right\relax \left.\fi#2#1\rvert}
\let\abs=\envert
\let\norm=\enVert
\begin{document}
\title{\vspace{-10ex}  Lie-Trotter means in JB-algebras\footnote{Part of this work was done at University of Georgia}}
\author{Zhenhua Wang}
\date{\today}
\maketitle
\markboth{Lie-Trotter means in JB-algebras}{Lie-Trotter means in JB-algebras}

\begin{abstract}
  We initiate the study of Lie-Trotter means in JB-algebras, which is an extension of Lie-Trotter formulas in JB-algebras. We show that two-variable Lie-Trotter means include the weighted arithmetic mean, weighted harmonic mean, weighted geometric mean, and weighted spectral geometric mean. Consequently, several generalized Lie-Trotter formulas in JB-algebras are derived. Additionally, we demonstrate that the Sagae-Tanabe mean and Hansen's induction mean in JB-algebras are multivariable Lie-Trotter means. In the end, using arithmetic-geometric-harmonic mean inequalities we provide a characterization of multivariate Lie-Trotter means.
\end{abstract}

\noindent {Key words: Lie-Trotter means, JB-algebra, Spectral geometric mean, Sagae-Tanabe mean, Hansen's induction mean.}

\vspace{.5cm}

\noindent{MSC 2020: Primary 46H70, 47A64, 17C90, 15A16; Secondary  
	17C65, 
	81R15,
	81P45,
	94C99.}

\tableofcontents

\section{Introduction}
In 1934, Jordan formalized quantum theory based on the structure of Jordan algebras \cite{Jordan34}. These algebras, characterized by their non-associative properties, provide a broader framework for
 investigating quantum mechanics. The initial conjecture was that this quantum formulation could extend its applicability to relativity and nuclear phenomena \cite{Jordan34}. Alternatively, Jordan algebras serve as a conceptual extension of real/complex quantum theory. For instance, in conventional quantum theory, the product of two observables (self-adjoint operators) may not necessarily be an observable. But the Jordan product of two observables is an observable. This research direction is motivated by the fact that observables in a quantum system naturally form a JB-algebra, which is non-associative. Hence, JB-algebras are considered as objects of interest in the study of quantum systems. Beyond this, JB-algebras have found extensive applications in various domains, including analysis, geometry, operator theory, and more. Further details on these applications can be found in \cite{Chu, Upmeier, Upmeier1}.

The Lie-Trotter formula has played a significant  role in quantum mechanics, quantum computing, and quantum simulations. Its significance lies in its ability to decompose a complicated quantum system into simpler systems.  Recently, by utilizing the Jordan product, the study of generalized Lie-Trotter formulas and Suzuki type error estimates in JB-algebras and Banach algebra was initiated by the author and collaborators \cite{Wang2023_LieTrtfml, Wang2023}. Despite an arbitrary number of elements in a non-associative JB-algebra, we demonstrated that Lie-Trotter formulas and Suzuki type estimates still hold for such algebras.

In a different direction, the notion of mean in JB-algebras was introduced by Lawson and Lim \cite{Lawson2007}. Later, unaware of their work, the author and his collaborators independently studied the weighted means and their properties in the setting of JB-algebras \cite{Wang2021_meanJB}. Many identities and inequalities for JB-algebras were established \cite{Wang2021_meanJB}.

This paper extends the Lie-Trotter formulas established in \cite{Wang2023_LieTrtfml} and further explores the study of means in JB-algebras by introducing the concept of Lie-Trotter means in the setting of JB-algebras. \Cref{sct:sptmn} is dedicated to the study of the spectral geometric mean, along with its fundamental properties. In \Cref{sct:2LTtmn}, we show that the weighted arithmetic mean, weighted harmonic mean, weighted geometric mean, and weighted spectral geometric mean are two variable Lie-Trotter means. As a result, several generalized Lie-Trotter formulas in JB-algebras are derived. Furthermore, in \Cref{sct:nLTtmn}, we prove that Sagae-Tanabe mean and Hansen's induction mean in JB-algebras are multivariable Lie-Trotter means. We conclude our paper by presenting a characterization of multivariate Lie-Trotter means through the arithmetic-geometric-harmonic mean inequalities.

\subsection{Some common notation}
We give some background on JB-algebra and fix the notation. For more information, we refer the reader to \cite{Alfsen2003_JBa, Hanche1984_ja, Wang2021_meanJB, Wang2022_entropyJB}.
\medskip

\begin{definition}\label{def:ja}
	A {\bf Jordan algebra} ${\mathfrak A} $ over real number is a vector space ${\mathfrak A}$ over $\Rdb$ equipped with a  bilinear product $\circ$ that satisfies the following identities:
	$$A\circ B =B\circ A, \,\ \,\ (A^2\circ B)\circ A=A^2\circ (B\circ A),$$
where $\displaystyle A^2=A\circ A.$ 
\end{definition}

Any associative algebra ${\mathfrak A}$ has an underlying Jordan algebra structure with Jordan product given by 
	$$A\circ B=(AB+BA)/2.$$
	Jordan subalgebras of such underlying Jordan algebras are called {\bf special}. 
	
\medskip

\begin{definition}\label{def: jba}
	A {\bf JB-algebra} is a Jordan algebra ${\mathfrak A}$ over ${\mathbb R}$ with a complete norm satisfying the following conditions for $A, B\in {\mathfrak A}:$ 
\begin{align*}
\Vert A\circ B\Vert \leq \Vert A\Vert \Vert B\Vert,~~\Vert A^2\Vert=\Vert A\Vert^2,~~{\rm and}~~\Vert A^2\Vert \leq \Vert A^2+B^2\Vert.		
\end{align*}
\end{definition} 
\medskip

As an important object in physics, the set of bounded self adjoint operators on a Hilbert space $H$, denoted by $B(H)_{sa}$, which is the set of observables in a quantum mechanical system, is a JB-algebra. However, it is not an associative algebra.
\medskip

\begin{definition}\label{def:pstvt}
Let $\Afk$ be a unital JB-algebra. 
We say $A\in \Afk$ is {\bf invertible} if there exists $B\in \Afk,$ which is called {\bf Jordan inverse} of $A,$ such that  
\begin{align*}
	A\circ B=I \quad \mbox{and}\quad A^2\circ B=A.	
\end{align*}
An element $S$ of a JB-algebra ${\mathfrak A}$ satisfying $S^2=I$ is called a {\bf symmetry}. Obviously, $S$ is invertible and its inverse is itself. 

 The {\bf spectrum} of $A$ is defined by 
 \begin{align*}
 \Sp{A}:=\{\lambda\in \Rdb\,|\, A-\lambda I\,\ \text{ is not invertible in} \Afk \}.	
 \end{align*}
If $\Sp{A}\subset [0,\infty),$ we say $A$ is {\bf positive}, and write $A \geq 0$. 	
\end{definition}
\medskip

Let ${\mathfrak A}$ be a unital JB-algebra and ${\mathfrak A}_+$ be the closed convex cone of positive elements in ${\mathfrak A}.$ And, we denote ${\mathfrak A}_{++}$ as the open convex cone of positive invertible elements in ${\mathfrak A}.$ 
\medskip

 \begin{definition}\label{def:uoperator}
Let $\Afk$ be a unital JB-algebra and $A, B \in \Afk$.  We define a map $\Ul_A$ on $\Afk$ by
\begin{align*}
\label{JI}
	\Ul_{A}(B):=\{ABA\}:= 2(A\circ B)\circ A -A^2\circ B.	
\end{align*}
 \end{definition}
Note that $ABA$ is meaningless unless $\Afk$ is special, in which case $\{ABA\}=ABA.$ 
The following proposition will be used repeatedly in this paper.
\medskip

\begin{proposition}{\rm \cite[Lemma 1.23-1.25]{Alfsen2003_JBa}} \label{3inv}
Let $\Afk$ be a unital JB-algebra and $A, B\in \Afk$.
\begin{enumerate}[{\rm (1)}]
	\item If $B$ is positive, then $\Ul_A(B)=\{ABA\}\geq 0.$ 
	\item If $A, B$ are invertible, then $\{ABA\}$ is invertible with inverse $\{A^{-1}B^{-1}A^{-1}\}.$
	\item If $A$ is invertible, then $\Ul_A$ has a bounded inverse $\Ul_{A^{-1}}.$

\end{enumerate}	
\end{proposition}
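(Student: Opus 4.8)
The plan is to reduce each of the three assertions to a routine computation with associative operator products by invoking the JB-algebra form of the Shirshov--Cohn theorem: the norm-closed JB-subalgebra $\Jl$ of $\Afk$ generated by any two elements together with the unit $I$ is \emph{special}, i.e.\ there is an isometric Jordan isomorphism of $\Jl$ onto a norm-closed Jordan subalgebra of $B(H)_{sa}$ for some Hilbert space $H$ (see \cite{Hanche1984_ja, Alfsen2003_JBa}). Under such an identification the Jordan triple product becomes the ordinary one, $\{XYX\}=XYX$, and---because the unit is shared---spectra, positivity, invertibility, and Jordan inverses are all intrinsic and hence preserved. This is the device that converts Jordan identities in two ``variables'' into elementary operator facts; the only care needed is to verify in each case that the elements involved genuinely lie in a two-generated subalgebra.

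First I would treat (1) and (2). For (1), pass to the subalgebra $\Jl$ generated by $A$, $B$, $I$ and realize $A,B$ as self-adjoint operators. Since $B\geq 0$ means $\Sp{B}\subset[0,\infty)$ and the spectrum is preserved, $B$ is a positive operator, so $B=C^2$ with $C=C^*$; then $\Ul_A(B)=\{ABA\}=ABA=(CA)^*(CA)\geq 0$ as an operator, and transporting back gives $\Ul_A(B)\geq 0$ in $\Afk$. For (2), with $A,B$ invertible I would first note that their Jordan inverses coincide with the operator inverses $A^{-1},B^{-1}$ (one checks $A\circ A^{-1}=I$ and $A^2\circ A^{-1}=A$, then uses uniqueness of the Jordan inverse), and that these inverses again lie in $\Jl$. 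Since $(ABA)^{-1}=A^{-1}B^{-1}A^{-1}$ in $B(H)$, transporting back shows $\{ABA\}$ is invertible with inverse $\{A^{-1}B^{-1}A^{-1}\}$.

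For (3), which I expect to be the delicate point, the reduction must be carried out pointwise, because $\Ul_A$ is a map on all of $\Afk$ rather than on a two-generated subalgebra. I would fix $C\in\Afk$ and let $\Jl$ be the closed subalgebra generated by $A$, $C$, $I$; since $A$ is invertible, continuous functional calculus places $A^{-1}$ in $\Jl$, so $\Ul_{A^{-1}}(C)\in\Jl$ and hence $\Ul_A\Ul_{A^{-1}}(C)\in\Jl$. Realizing $\Jl$ in $B(H)_{sa}$ gives $\Ul_A\Ul_{A^{-1}}(C)=A(A^{-1}CA^{-1})A=C$, and symmetrically $\Ul_{A^{-1}}\Ul_A(C)=C$. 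As $C$ was arbitrary, $\Ul_{A^{-1}}=\Ul_A^{-1}$ as linear maps on $\Afk$. Boundedness is then immediate: from $\Ul_{A^{-1}}(B)=2(A^{-1}\circ B)\circ A^{-1}-(A^{-1})^2\circ B$, submultiplicativity of the norm together with $\norm{(A^{-1})^2}=\norm{A^{-1}}^2$ yields $\norm{\Ul_{A^{-1}}(B)}\leq 3\norm{A^{-1}}^2\norm{B}$.

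The main obstacle---really the only one---is the rigorous justification of the reduction to the special case: establishing that the two-generated JB-subalgebra is a JC-algebra and that positivity, spectrum, and inversion are unaffected by the embedding. The latter rests on spectral permanence for the unital subalgebra generated by a single element, which is isometrically isomorphic to $C(\Sp{A})$, so that the spectrum is computed the same way in $\Jl$, in $\Afk$, and in $B(H)$. Once this machinery is isolated, I would state the Shirshov--Cohn reduction once and simply apply it three times, after which (1)--(3) are short.
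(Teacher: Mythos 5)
The paper offers no proof of this proposition at all: it is quoted verbatim from \cite[Lemmas 1.23--1.25]{Alfsen2003_JBa}, so there is no in-paper argument to compare against. Judged on its own, your Shirshov--Cohn reduction is correct and is one of the two standard routes to these facts; the source the paper cites instead works intrinsically, via Macdonald's theorem and the fundamental identity $\Ul_{\Ul_A(B)}=\Ul_A\Ul_B\Ul_A$, which avoids any embedding but requires more Jordan-algebraic bookkeeping. Your route buys very short computations---$\{AC^2A\}=(CA)^*(CA)\geq 0$, $(ABA)^{-1}=A^{-1}B^{-1}A^{-1}$, $A(A^{-1}CA^{-1})A=C$---at the cost of the transfer principle you correctly isolate as the crux: one needs (a) that $\{ABA\}$, $A^{-1}$, $B^{-1}$ and $\Ul_{A^{-1}}(C)$ all lie in the closed unital subalgebra $\Jl$ generated by the two relevant elements (true, since $A^{-1}$ lies in the closed subalgebra generated by $A$ and $I$ by functional calculus), and (b) spectral permanence in both directions, from $\Jl$ to $\Afk$ and from the JC-realization of $\Jl$ to $B(H)$, so that positivity and invertibility are intrinsic. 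Both ingredients are standard (see \cite[Theorem 7.2.5]{Hanche1984_ja} and the spectral theory in \cite{Alfsen2003_JBa}), and your handling of (3) pointwise, with boundedness read off from the defining formula $\Ul_{A^{-1}}(B)=2(A^{-1}\circ B)\circ A^{-1}-(A^{-1})^2\circ B$, is exactly right. I see no gap.
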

\medskip

The weighed means in JB-algebras were introduced in \cite{Wang2021_meanJB} as follows
\medskip

\begin{definition} \label{def:wtmnJB}
For two positive invertible elements $A, B$ in a unital JB-algebra ${\mathfrak A}$ and $0\leq \lambda\leq 1$, we denote
\begin{eqnarray*}
&&\Al_2(A, B)=A\triangledown_{\lambda} B=(1-\lambda)A+\lambda B;\\
&&\Gl_2(A, B)=A\#_{\lambda} B=\left\{A^{\frac{1}{2}}\{A^{-\frac{1}{2}}BA^{-\frac{1}{2}}\}^{\lambda}A^{\frac{1}{2}}\right\};\\
&&\Hl_2(A, B)= A!_{\lambda} B=\left((1-\lambda)A^{-1}+\lambda B^{-1}\right)^{-1}.
\end{eqnarray*}	
as {\bf weighted arithmetic mean}, {\bf weighted geometric mean} and  {\bf weighted harmonic mean} respectively. When $\lambda=1/2,$ we denote $A\#_{\lambda} B$ as $A\#B.$
\end{definition}

\section{Spectral geometric means}\label{sct:sptmn}

\subsection{Geometric mean revisited}

The following result is the uniqueness of positive invertible solution of generalized Riccati equation in JB-algebras, which is no doubt known to experts, see e.g. \cite[Section 2.5]{Roelands2019}.
\medskip

\begin{proposition}\label{prop:unqgmnJB}
Let $A, B$ be two positive invertible elements in a JB-algebra ${\mathfrak A}$. The Riccati equation $\{XA^{-1}X\}=B$ has a unique positive invertible solution $\displaystyle A\#B$. 	
\end{proposition}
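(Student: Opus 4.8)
The plan is to solve the Riccati equation by the change of variables $X=\{A^{1/2}YA^{1/2}\}=\Ul_{A^{1/2}}(Y)$, which converts the equation that is quadratic in $X$ into a pure square-root equation in $Y$. The engine for this is the \emph{fundamental formula} for the quadratic representation, $\Ul_{\{AYA\}}=\Ul_{A}\Ul_{Y}\Ul_{A}$, a standard identity valid in any JB-algebra, together with \Cref{3inv}. First I would set $C:=\{A^{-1/2}BA^{-1/2}\}=\Ul_{A^{-1/2}}(B)$. Since $B\geq 0$ and $A^{-1/2}$ is invertible, \Cref{3inv}(1) gives $C\geq 0$ and part~(2) gives that $C$ is invertible; hence $C$ is positive invertible and admits a unique positive invertible square root $C^{1/2}$ via the continuous functional calculus on the associative JB-subalgebra generated by $C$.

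For existence I would take $X=A\#B=\{A^{1/2}C^{1/2}A^{1/2}\}$. The fundamental formula with $Y=C^{1/2}$ yields
\[
\{XA^{-1}X\}=\Ul_{X}(A^{-1})=\Ul_{A^{1/2}}\Ul_{C^{1/2}}\Ul_{A^{1/2}}(A^{-1}).
\]
Evaluating from the inside out: $\Ul_{A^{1/2}}(A^{-1})=\{A^{1/2}A^{-1}A^{1/2}\}=I$, since all factors lie in the commutative (associative) subalgebra generated by $A$, where the triple product collapses to the ordinary product; next $\Ul_{C^{1/2}}(I)=(C^{1/2})^2=C$, using $\Ul_{D}(I)=D^2$; and finally $\Ul_{A^{1/2}}(C)=\Ul_{A^{1/2}}\Ul_{A^{-1/2}}(B)=B$, because $\Ul_{A^{1/2}}$ and $\Ul_{A^{-1/2}}$ are mutually inverse by \Cref{3inv}(3). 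Thus $A\#B$ solves the equation.

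For uniqueness I would exploit that $\Ul_{A^{1/2}}$ is a bijection of $\mathfrak{A}_{++}$ onto itself: it is invertible on all of $\mathfrak{A}$ with inverse $\Ul_{A^{-1/2}}$ by \Cref{3inv}(3), and both it and its inverse preserve positivity and invertibility by parts~(1)--(2). Consequently every positive invertible $X$ is uniquely of the form $X=\Ul_{A^{1/2}}(Y)$ with $Y=\Ul_{A^{-1/2}}(X)\in\mathfrak{A}_{++}$, and the computation above rewrites the equation $\{XA^{-1}X\}=B$ as $\Ul_{A^{1/2}}(Y^2)=B$, i.e.\ $Y^2=C$. Since the positive square root in a JB-algebra is unique, $Y=C^{1/2}$, whence $X=A\#B$ is the only solution.

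The only place that demands care is keeping the three inside-out evaluations within a single-generated, hence associative, subalgebra, where the Jordan triple product reduces to the ordinary product; the identities $\Ul_{A^{1/2}}(A^{-1})=I$ and $\Ul_{A^{1/2}}\Ul_{A^{-1/2}}=\mathrm{id}$ are exactly the steps where this must be checked. Beyond this bookkeeping I expect no serious obstacle, since the fundamental formula and \Cref{3inv} do all the heavy lifting and reduce the whole problem to the well-understood functional calculus of one element.
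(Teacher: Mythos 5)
Your proof is correct and follows essentially the same route as the paper: existence is verified by the fundamental formula $\Ul_{\{AYA\}}=\Ul_A\Ul_Y\Ul_A$ (the paper invokes Macdonald's theorem and identity (1.15) of Alfsen--Shultz for the same computation), and uniqueness is obtained by conjugating with $\Ul_{A^{-1/2}}$ to reduce the Riccati equation to $Y^2=\{A^{-1/2}BA^{-1/2}\}$ and appealing to the uniqueness of positive square roots. Your substitution $X=\Ul_{A^{1/2}}(Y)$ merely repackages the paper's step $\{A^{-1/2}XA^{-1/2}\}^2=\{A^{-1/2}\{XA^{-1}X\}A^{-1/2}\}$.
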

\begin{proof}
By Macdonald Theorem (see e.g. Theorem 1.13 \cite{Alfsen2003_JBa}), a direct computation shows that 
\begin{align*}
\left\{(A\#B)A^{-1}(A\#B)\right\}&=\left\{\left\{A^{\frac{1}{2}}\{A^{-\frac{1}{2}}BA^{-\frac{1}{2}}\}^{\frac{1}{2}}A^{\frac{1}{2}}\right\}A^{-1}\left\{A^{\frac{1}{2}}\{A^{-\frac{1}{2}}BA^{-\frac{1}{2}}\}^{\frac{1}{2}}A^{\frac{1}{2}}\right\}\right\}\\
&=\left\{A^{\frac{1}{2}}\left\{\left\{A^{-\frac{1}{2}}BA^{-\frac{1}{2}}\right\}^{\frac{1}{2}}\{A^{\frac{1}{2}}A^{-1}A^{\frac{1}{2}}\}\left\{A^{-\frac{1}{2}}BA^{-\frac{1}{2}}\right\}^{\frac{1}{2}}\right\}A^{\frac{1}{2}}\right\}\\
&=\left\{A^{\frac{1}{2}}\left\{A^{-\frac{1}{2}}BA^{-\frac{1}{2}}\right\}A^{\frac{1}{2}}\right\}=B.
\end{align*}

Suppose $X,Y$ are two positive solutions to the Riccati equation. Then by \cite[equality (1.15)]{Alfsen2003_JBa}
\begin{align*}
\left\{A^{-1/2}XA^{-1/2}\right\}^2=\left\{A^{-1/2}\left\{XA^{-1}X\right\}A^{-1/2}\right\}=\left\{A^{-1/2}\left\{YA^{-1}Y\right\}A^{-1/2}\right\}=\left\{A^{-1/2}YA^{-1/2}\right\}^2.	
\end{align*}
The uniqueness of positive square roots in JB-algebra indicates that 
\begin{align*}
\left\{A^{-1/2}XA^{-1/2}\right\}=\left\{A^{-1/2}YA^{-1/2}\right\}.	
\end{align*}
Therefore, $\displaystyle X=Y.$
\end{proof}

\medskip

\begin{proposition}\label{prop:smetric}
Let $A, B$ be two positive invertible elements in a unital JB-algebra $\Afk.$ Then the map $d:\Afk_{++}\times \Afk_{++}\to [0,\infty)$ defined by
\begin{align*}
d(A, B)=2\norm{\log(A^{-1}\#B)}	
\end{align*}
has the following properties: 
\begin{enumerate}[{\rm (i)}]
\item \label{item:smmtc} $d$ is a semi-metric.
	\item \label{item:sclm} $\displaystyle d(\alpha A, \alpha B)=d(A, B)$ for any $\alpha>0.$
	\item \label{item:ivs}  $\displaystyle d(A^{-1}, B^{-1})=d(A, B)$
	\item \label{item:untrivrt} $\displaystyle d\left(\{UAU\}, \{UBU\}\right)=d(A, B)$ for any symmetry $U\in \Afk.$
\end{enumerate}	
\end{proposition}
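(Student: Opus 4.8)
The plan is to reduce every assertion to a handful of algebraic identities for the geometric mean $\#$ together with the Riccati characterization of \Cref{prop:unqgmnJB}. Concretely, I would first record (re-deriving each, if needed, by passing to the associative special subalgebra generated by the two elements involved and invoking Macdonald's theorem / the Shirshov--Cohn theorem) the following facts for positive invertible $A,B$ and scalars $\alpha,\beta>0$: commutativity $A\#B=B\#A$; the inversion law $(A\#B)^{-1}=A^{-1}\#B^{-1}$; homogeneity $(\alpha A)\#(\beta B)=\sqrt{\alpha\beta}\,(A\#B)$; and invariance under any unital Jordan automorphism $\Phi$, i.e. $\Phi(A\#B)=\Phi(A)\#\Phi(B)$. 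Each follows by writing out $A\#B=\{A^{1/2}\{A^{-1/2}BA^{-1/2}\}^{1/2}A^{1/2}\}$ and using that $\Phi$, inversion, scalar multiplication, and taking powers all commute with the continuous functional calculus and with the $U$-operators.

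For part (i), non-negativity is immediate since $d=2\norm{\cdot}\ge 0$. For definiteness, observe that $d(A,B)=0$ iff $\log(A^{-1}\#B)=0$ iff $A^{-1}\#B=I$. By \Cref{prop:unqgmnJB}, $A^{-1}\#B$ is the unique positive invertible solution $X$ of $\{XAX\}=B$; since $\{IAI\}=A$, this solution equals $I$ exactly when $A=B$, giving $d(A,B)=0\iff A=B$. For symmetry, combine the inversion and commutativity laws to get $B^{-1}\#A=A\#B^{-1}=(A^{-1}\#B)^{-1}$, so $\log(B^{-1}\#A)=-\log(A^{-1}\#B)$; the norms then agree and $d(B,A)=d(A,B)$.

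Parts (ii) and (iii) are one-line consequences of the recorded identities. For (ii), homogeneity gives $(\alpha A)^{-1}\#(\alpha B)=(\alpha^{-1}A^{-1})\#(\alpha B)=\sqrt{\alpha^{-1}\alpha}\,(A^{-1}\#B)=A^{-1}\#B$, whence $d(\alpha A,\alpha B)=d(A,B)$. For (iii), the inversion law yields $(A^{-1})^{-1}\#B^{-1}=A\#B^{-1}=(A^{-1}\#B)^{-1}$, so $\log$ merely changes sign and $d(A^{-1},B^{-1})=d(A,B)$. For (iv), the key point is that for a symmetry $U$ the map $\Ul_U$ is a unital Jordan automorphism: it is its own inverse since $U^{-1}=U$ by \Cref{3inv}(3), it fixes the unit ($\{UIU\}=U^2=I$), and it preserves the cone $\Afk_+$, hence it is an isometry commuting with the continuous functional calculus. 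Thus $\Ul_U$ commutes with inversion, square roots, and the $U$-operators, so by automorphism-invariance $\{UAU\}^{-1}\#\{UBU\}=\Ul_U(A^{-1})\#\Ul_U(B)=\Ul_U(A^{-1}\#B)$, and therefore $\log\bigl(\{UAU\}^{-1}\#\{UBU\}\bigr)=\Ul_U\bigl(\log(A^{-1}\#B)\bigr)$. Taking norms and using that $\Ul_U$ is isometric gives $d(\{UAU\},\{UBU\})=d(A,B)$.

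The main obstacle is the non-associativity underlying all of this: the ``standard'' geometric-mean identities cannot be taken for granted and each must be justified by reduction to the associative special subalgebra generated by the relevant elements via Macdonald's theorem, and one must verify carefully that for a symmetry $U$ the operator $\Ul_U$ is genuinely an isometric Jordan automorphism commuting with both $\log$ and $\#$. Once these functorial properties of $\#$ and $\Ul_U$ are established, properties (i)--(iv) follow formally.
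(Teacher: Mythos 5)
Your proof is correct and follows essentially the same route as the paper: both reduce everything to the standard identities for $\#$ (commutativity, inversion, homogeneity, congruence invariance) cited from the operator-means paper, plus the fact that for a symmetry $U$ the map $\Ul_U$ commutes with $\log$ and preserves the norm. The only (cosmetic) difference is in the definiteness part of (i), where you invoke uniqueness of the Riccati solution from \Cref{prop:unqgmnJB} while the paper unwinds $A^{-1}\#B=I$ directly to get $\{A^{1/2}BA^{1/2}\}^{1/2}=A$ and hence $B=A$; both arguments are valid.
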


\begin{proof}
For the proof of \Cref{item:smmtc}, the non-negativity of $d(A, B)$ is obvious. If $A=B,$ then $\displaystyle A^{-1}\#B=I.$ Thus, $\displaystyle \log(A^{-1}\# B)=0.$ Conversely, if $d(A, B)=0,$ then $\displaystyle \log(A^{-1}\# B)=0.$ By functional calculus in JB-algebra, $A^{-1}\#B=\exp(0)=I,$ which implies that $\{A^{1/2}BA^{1/2}\}^{1/2}=A.$ Thus, $B=A$ since $B=\{A^{-1/2}A^2A^{-1/2}\}=A.$ By \cite[Collorary 1 and Proposition 6(v)]{Wang2021_meanJB}, $(A^{-1}\# B)^{-1}=A\# B^{-1}=B^{-1}\# A.$ Therefore, 
\begin{align*}
d(A, B)=2\norm{\log(A^{-1}\# B)}	=2\norm{\log(A^{-1}\# B)^{-1}}=2\norm{\log(B^{-1}\# A)}=d(B, A).		
\end{align*}

\cref{item:sclm} follows from the fact $(\alpha A)^{-1}\#(\alpha B)=A^{-1}\#B$ by \cite[Proposition 6(i)]{Wang2021_meanJB}

For the proof of \Cref{item:ivs}, by \cite[Proposition 6(v)]{Wang2021_meanJB}
\begin{align*}
d(A^{-1}, B^{-1})=2\norm{\log(A\# B^{-1})}=2\norm{\log( B\#A^{-1})}=d(B, A)=d(A, B).		\end{align*}

As for \cref{item:untrivrt}, \cite[equation 16]{Wang2021_meanJB} implies that $\log\left(\{U(A^{-1}\# B)U\}\right)=\{U\log\left(A^{-1}\# B\right)U\}.$ Thus,
\begin{align*}
d\big(\{UAU\}, \{UBU\}\big)&=2\norm{\log(\{UAU\}^{-1}\#  \{UBU\})}=2\norm{\log\left(\{U(A^{-1}\# B)U\}\right)}\\
&=2\norm{\{U\log\left(A^{-1}\# B\right)U\}}=2\norm{\log(A^{-1}\# B)}\\
&=d(A, B)
\end{align*}
\end{proof}

\subsection{Spectral Geometric mean}

\begin{definition}\label{def:wspectralgm}
Let $A$ and $B$ be two positive invertible elements in a unital JB-algebra ${\mathfrak A}$ and $\lambda\in \Rdb.$ The {\bf weighted spectral geometric mean} of $A$ and $B$ is defined as
\begin{align*}
A\natural_{\lambda}B:=\left\{\left(A^{-1}\#B\right)^{\lambda}A\left(A^{-1}\#B\right)^{\lambda}\right\}
\end{align*}	
If $\lambda=1/2,$ then we simply write $\displaystyle A\natural_{1/2}B$ as $\displaystyle A\natural B.$
\end{definition}
\medskip

We note that results from \cite[Section 4]{Lee2007_sptrgmtrc} for the weighted spectral geometric mean on symmetric cones, can be generalized to positive invertible elements in a unital JB-algebra.
\medskip

\begin{proposition}\label{prop:unqsptrgm}
Let $\Afk$ be a unital JB-algebra and $A, B\in \Afk_{++}.$ Then $\displaystyle X=A\natural_{\lambda}B$ is the unique positive invertible solution to the equation
\begin{align*}
(A^{-1}\#B)^{\lambda}=A^{-1}\#X	
\end{align*} 	
\end{proposition}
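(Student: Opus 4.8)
The plan is to reduce the equation to the Riccati characterization of the geometric mean furnished by \Cref{prop:unqgmnJB}. First I would set $Y := (A^{-1}\#B)^{\lambda}$, which is a positive invertible element of $\Afk$ by continuous functional calculus applied to $t\mapsto t^{\lambda}$ on the spectrum of the positive invertible element $A^{-1}\#B$. With this notation, \Cref{def:wspectralgm} says that the candidate solution is exactly $A\natural_{\lambda}B=\{YAY\}$, while the target equation reads $A^{-1}\#X=Y$.

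The key observation is that, for a fixed $A\in\Afk_{++}$, the assignments $X\mapsto A^{-1}\#X$ and $Z\mapsto\{ZAZ\}$ are mutually inverse bijections on $\Afk_{++}$. Indeed, applying \Cref{prop:unqgmnJB} with the pair $(A^{-1},X)$ in place of $(A,B)$, and using $(A^{-1})^{-1}=A$, shows that $Z=A^{-1}\#X$ is the \emph{unique} positive invertible solution of the Riccati equation $\{ZAZ\}=X$. Consequently, $A^{-1}\#X=Y$ holds if and only if $\{YAY\}=X$.

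For existence I would simply substitute $X=A\natural_{\lambda}B$. By \Cref{def:wspectralgm} this element is precisely $\{YAY\}$ (and it is positive invertible by \Cref{3inv}, since $A\geq 0$ and $Y$ is invertible), so the identity $\{YAY\}=X$ holds tautologically; the equivalence above then yields $A^{-1}\#X=Y=(A^{-1}\#B)^{\lambda}$, as required. For uniqueness, if $X'\in\Afk_{++}$ also satisfies $(A^{-1}\#B)^{\lambda}=A^{-1}\#X'$, that is $A^{-1}\#X'=Y$, then the same equivalence forces $\{YAY\}=X'$, whence $X'=\{YAY\}=A\natural_{\lambda}B=X$.

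The only point requiring genuine care is the bookkeeping in the Riccati reduction: one must apply the inversion $(A^{-1})^{-1}=A$ correctly so that the geometric mean $A^{-1}\#X$ is matched to the equation $\{ZAZ\}=X$ rather than to $\{ZA^{-1}Z\}=X$. Once that substitution is pinned down, both existence and uniqueness follow immediately from the uniqueness clause of \Cref{prop:unqgmnJB}, with no further computation needed.
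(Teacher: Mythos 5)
Your proof is correct. The uniqueness half coincides with the paper's: both rest on the observation that $Z=A^{-1}\#X$ is the unique positive invertible solution of $\{ZAZ\}=X$, which is \Cref{prop:unqgmnJB} applied to the pair $(A^{-1},X)$. Where you diverge is in the existence half. The paper verifies $A^{-1}\#(A\natural_{\lambda}B)=(A^{-1}\#B)^{\lambda}$ by direct computation: it expands $A^{-1}\#X$ via the explicit formula $\left\{A^{-1/2}\{A^{1/2}XA^{1/2}\}^{1/2}A^{-1/2}\right\}$, uses the fundamental identity $\Ul_{\{ABA\}}=\Ul_A\Ul_B\Ul_A$ together with uniqueness of positive square roots to identify $\{A^{1/2}(A^{-1}\#B)^{\lambda}A^{1/2}\}$ as the relevant square root, and then cancels the quadratic maps $\Ul_{A^{\pm 1/2}}$. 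You instead note that $X\mapsto A^{-1}\#X$ and $Z\mapsto\{ZAZ\}$ are mutually inverse bijections of $\Afk_{++}$ --- again a consequence of \Cref{prop:unqgmnJB} --- so that the equation $A^{-1}\#X=Y$ is equivalent to $X=\{YAY\}$, and with $Y=(A^{-1}\#B)^{\lambda}$ the candidate $X=A\natural_{\lambda}B=\{YAY\}$ satisfies the latter tautologically. This repackaging makes existence formal and pushes all computational content into the already-established \Cref{prop:unqgmnJB}; the side conditions it requires --- that $Y$ and $\{YAY\}$ lie in $\Afk_{++}$ --- you do verify via functional calculus and \Cref{3inv}. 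Both arguments are sound; yours is shorter and arguably more transparent, while the paper's has the minor virtue of exhibiting the identity $A^{-1}\#(A\natural_{\lambda}B)=(A^{-1}\#B)^{\lambda}$ as an explicit calculation.
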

\begin{proof}
If $X=A\natural_{\lambda}B,$ then
\begin{eqnarray}\label{eqt:gmnAX} 
&A^{-1}\#X=\left\{A^{-\frac{1}{2}}\left\{A^{\frac{1}{2}}XA^{\frac{1}{2}}\right\}^{\frac{1}{2}}A^{-\frac{1}{2}}\right\}\quad\quad \nonumber\\
&\quad\quad\quad \quad \quad \quad \quad \quad \quad \quad \quad \quad \quad \quad \quad =\left\{A^{-\frac{1}{2}}\left\{A^{\frac{1}{2}}\left\{\left(A^{-1}\#B\right)^{\lambda}A\left(A^{-1}\#B\right)^{\lambda}\right\}A^{\frac{1}{2}}\right\}^{\frac{1}{2}}A^{-\frac{1}{2}}\right\}\quad\quad 	
\end{eqnarray}

The uniqueness of positive square root in JB-algebras and \cite[Jordan identity (1.15)]{Alfsen2003_JBa} imply 
\begin{align*}
\left\{A^{\frac{1}{2}}\left\{\left(A^{-1}\#B\right)^{\lambda}A\left(A^{-1}\#B\right)^{\lambda}\right\}A^{\frac{1}{2}}\right\}^{\frac{1}{2}}=\left\{A^{\frac{1}{2}}\left(A^{-1}\#B\right)^{\lambda}A^{\frac{1}{2}}\right\}
\end{align*}
Then, the equation (\ref{eqt:gmnAX}) becomes
\begin{align*}
A^{-1}\#X&=	\left\{A^{-\frac{1}{2}} \left\{A^{\frac{1}{2}}\left(A^{-1}\#B\right)^{\lambda}A^{\frac{1}{2}}\right\}A^{-\frac{1}{2}}\right\}\\
&=\left(A^{-1}\#B\right)^{\lambda}.
\end{align*}
If $Y\in {\mathfrak A}$ is another solution, then $\displaystyle A^{-1}\#X=A^{-1}\#Y.$ By \Cref{prop:unqgmnJB},
\begin{align*}
X=\left\{(A^{-1}\#X)A(A^{-1}\#X)\right\}=\left\{(A^{-1}\#Y)A(A^{-1}\#Y)\right\}=Y.
\end{align*}
\end{proof}
\medskip

\begin{remark}
\Cref{prop:unqsptrgm} implies $A\natural_1 B=B.$	
\end{remark}
\medskip

We include some fundamental properties of wighted spectral geometric mean in JB-algebras, which are analogous to those on symmetric cones.
\medskip

\begin{proposition}\label{prop:Spectralgm}
Let $A, B$ be two positive invertible elements in a unital JB-algebra $\Afk$ and $\lambda \in [0, 1].$

\begin{enumerate}[{\rm (i)}]
\item \label{item:spgmjthmgnt} $(\alpha A)\natural_{\lambda}(\beta B)= \alpha^{1-\lambda} \beta^{\lambda} (A\natural_{\lambda}B),$ for any nonnegative numbers $\alpha$ and $\beta.$ 	
\item \label{item:spgmcgrc} $\{C(A\natural_{\lambda}B)C\}=\{CAC\}\natural_{\lambda}\{CBC\},$ for any symmetry $C$ in ${\mathcal A}.$
\item \label{item:spgmsmt} $(A\natural_{\lambda} B)=B\natural_{1-\lambda} A.$
\item \label{item:spgmsfdlt} $(A\natural_{\lambda} B)^{-1}=A^{-1}\natural_{\lambda} B^{-1}.$
\end{enumerate}
\end{proposition}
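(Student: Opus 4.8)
The plan is to use the Riccati-type characterization of \Cref{prop:unqsptrgm} as the main engine: for positive invertible $P,Q$ the mean $P\natural_\lambda Q$ is the \emph{unique} positive invertible solution $X$ of $(P^{-1}\#Q)^\lambda=P^{-1}\#X$. Thus for each identity it suffices to exhibit a positive invertible candidate and check that it satisfies the appropriate defining equation. This reduces \Cref{item:spgmjthmgnt}, \Cref{item:spgmcgrc} and \Cref{item:spgmsfdlt} to the standard properties of the geometric mean from \cite{Wang2021_meanJB}---homogeneity $(\alpha P)\#(\beta Q)=\sqrt{\alpha\beta}\,(P\#Q)$, symmetry $P\#Q=Q\#P$, the inversion law $(P\#Q)^{-1}=P^{-1}\#Q^{-1}$, and congruence invariance $\{C(P\#Q)C\}=\{CPC\}\#\{CQC\}$---together with the relation $A^{-1}\#(A\natural_\lambda B)=(A^{-1}\#B)^\lambda$ furnished by \Cref{prop:unqsptrgm} itself.

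For \Cref{item:spgmjthmgnt} I would take $Z=\alpha^{1-\lambda}\beta^\lambda(A\natural_\lambda B)$ and verify it solves the equation for $(\alpha A)\natural_\lambda(\beta B)$: homogeneity gives $(\alpha A)^{-1}\#(\beta B)=(\beta/\alpha)^{1/2}(A^{-1}\#B)$, so the left-hand side is $(\beta/\alpha)^{\lambda/2}(A^{-1}\#B)^\lambda$, while $(\alpha A)^{-1}\#Z=(\beta/\alpha)^{\lambda/2}\bigl(A^{-1}\#(A\natural_\lambda B)\bigr)=(\beta/\alpha)^{\lambda/2}(A^{-1}\#B)^\lambda$ matches. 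For \Cref{item:spgmcgrc}, with $C$ a symmetry, congruence invariance and $C^{-1}=C$ give $\{CAC\}^{-1}\#\{CBC\}=\{C(A^{-1}\#B)C\}$; since $\Ul_C$ is a Jordan automorphism that commutes with functional calculus (the mechanism behind \cite[equation 16]{Wang2021_meanJB}), one checks that $\{C(A\natural_\lambda B)C\}$ satisfies the equation $\{CAC\}^{-1}\#X=\{C(A^{-1}\#B)^\lambda C\}$ defining $\{CAC\}\natural_\lambda\{CBC\}$. For \Cref{item:spgmsfdlt}, writing $W=A\natural_\lambda B$ and applying the inversion law twice gives $A\#W^{-1}=(A^{-1}\#W)^{-1}=\bigl((A^{-1}\#B)^\lambda\bigr)^{-1}=(A\#B^{-1})^\lambda$, which is exactly the equation defining $A^{-1}\natural_\lambda B^{-1}$, so $W^{-1}=A^{-1}\natural_\lambda B^{-1}$.

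The symmetry \Cref{item:spgmsmt} is where I expect the real work, since it genuinely intertwines $A$ and $B$ instead of following from a single $\#$-identity. Setting $M=A^{-1}\#B$, the Riccati characterization of \Cref{prop:unqgmnJB} gives $\{MAM\}=B$, while the inversion and symmetry laws give $B^{-1}\#A=M^{-1}$; unwinding both definitions turns the claim into the single-element identity
\[
\{M^{\lambda-1}\{MAM\}M^{\lambda-1}\}=\{M^\lambda A M^\lambda\},
\]
that is, the composition law $\Ul_{M^{\lambda-1}}\Ul_M=\Ul_{M^\lambda}$ for powers of one element. The delicate point is that no such two-factor composition law holds for $U$-operators of two independent elements in a general Jordan algebra; it is available here only because $M$ and $A$---and hence $B=\{MAM\}$ and every real power $M^s$---lie in the JB-subalgebra they generate, which is special. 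Working inside an associative envelope of that subalgebra collapses the identity to $M^{\lambda-1}(MAM)M^{\lambda-1}=M^\lambda A M^\lambda$, and carrying out this associative reduction carefully for the non-polynomial powers $M^s$---in the spirit of the uses of Macdonald's theorem \cite[Theorem 1.13]{Alfsen2003_JBa} elsewhere in the paper---is the main obstacle.
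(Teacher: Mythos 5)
Your reduction of \cref{item:spgmjthmgnt}, \cref{item:spgmcgrc} and \cref{item:spgmsfdlt} to the uniqueness statement of \Cref{prop:unqsptrgm} is correct and is a genuinely different (and arguably cleaner) organization than the paper's: the paper instead computes directly from \Cref{def:wspectralgm}, using homogeneity of $\#$ for (i), the integral representation $x^{\lambda}=\frac{\sin(\lambda\pi)}{\pi}\int_0^\infty t^{\lambda-1}(1+tx^{-1})^{-1}\,dt$ plus the identity 2.8.6 of \cite{Hanche1984_ja} to get $\{CAC\}^\lambda=\{CA^\lambda C\}$ for (ii), and the inversion law $\Ul_X(Y)^{-1}=\Ul_{X^{-1}}(Y^{-1})$ for (iv). Your route buys uniformity (one verification template for three parts) at the cost of leaning on \Cref{prop:unqsptrgm}; note that in (ii) the step $(\{C(A^{-1}\#B)C\})^{\lambda}=\{C(A^{-1}\#B)^{\lambda}C\}$ is not free --- it is exactly the point the paper spends the integral representation on, so you should either cite that computation or reproduce it rather than fold it into ``$\Ul_C$ commutes with functional calculus.''

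For \cref{item:spgmsmt} you arrive at precisely the identity the paper uses, $\Ul_{M^{1-\lambda}}\Ul_{M^{\lambda}}(A)=\Ul_{M}(A)=B$ with $M=A^{-1}\#B$, but you leave the justification for non-integer powers as an unresolved ``obstacle.'' This is a genuine gap as written, though a small one: Macdonald's theorem only covers polynomial identities, so it cannot by itself handle $M^{s}$ for real $s$. The paper closes it by invoking the Shirshov--Cohen theorem for JB-algebras \cite[Theorem 7.2.5]{Hanche1984_ja}: the norm-closed unital subalgebra generated by $M$ and $A$ is (isometrically isomorphic to) a JC-algebra, hence special, and since each real power $M^{s}$ lies in the closed subalgebra generated by $M$ and $I$, the whole computation $M^{1-\lambda}(M^{\lambda}AM^{\lambda})M^{1-\lambda}=MAM$ may legitimately be carried out in an associative envelope. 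Citing that theorem is all that is missing from your argument.
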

\begin{proof}
For (\ref{item:spgmjthmgnt}), by\cite[Proposition 6(i)]{Wang2021_meanJB},
\begin{align*}
(\alpha A)\natural_{\lambda}(\beta B)&= \left\{\left[\left(\alpha A\right)^{-1}\#\left(\beta B\right)\right]^{\lambda}(\alpha A)\left[\left(\alpha A\right)^{-1}\#\left(\beta B\right)\right]^{\lambda}\right\}\\
&=\left\{\left[\alpha^{-\frac{\lambda}{2}}\beta^{\frac{\lambda}{2}} \left(A^{-1}\#B\right)^{\lambda}\right](\alpha A)\left[\alpha^{-\frac{\lambda}{2}}\beta^{\frac{\lambda}{2}} \left(A^{-1}\#B\right)^{\lambda}\right]\right\}\\
&=\alpha^{1-\lambda} \beta^{\lambda}\left\{\left(A^{-1}\#B\right)^{\lambda}A\left(A^{-1}\#B\right)^{\lambda}\right\}.
\end{align*}

Proof of (\ref{item:spgmcgrc}). Denote $\displaystyle L=\{CAC\}\natural_{\lambda}\{CBC\}.$ According to \cite[Proposition 6(iv)]{Wang2021_meanJB}
\begin{align*}
L&=\left\{\left[\left\{CAC\right\}^{-1}\#\left\{CBC\right\}\right]^{\lambda}\left(\left\{CAC\right\}\right)\left[\left\{CAC\right\}^{-1}\#\left\{CBC\right\}\right]^{\lambda}\right\}\\
&=\left\{\left\{C(A^{-1}\#B)C\right\}^{\lambda}\left(\left\{CAC\right\}\right)\left\{C(A^{-1}\#B)C\right\}^{\lambda}\right\}
\end{align*}

From \cite[Proposition 2.1]{Rassouli2017_rltopetp}, for $\lambda \in (0, 1)$ and $x \in (0, \infty)$, 
\begin{align*}
x^{\lambda} = \dfrac{\sin(\lambda \pi)}{\pi}\int_0^{\infty} t^{\lambda -1}(1+tx^{-1})^{-1}dt.	
\end{align*}	
By functional calculus in JB-algebras
\begin{align}
\{CAC\}^{\lambda} &= \dfrac{\sin(\lambda \pi)}{\pi}\int_0^{\infty} t^{\lambda -1}\left(1+t\{CAC\}^{-1}\right)^{-1}dt\nonumber\\
&=\dfrac{\sin(\lambda \pi)}{\pi}\int_0^{\infty}t^{\lambda -1}\{C^{-1}(1+tA^{-1})C^{-1}\}^{-1}dt\nonumber\\
&=\dfrac{\sin(\lambda \pi)}{\pi}\int_0^{\infty}t^{\lambda -1}\{C(1+tA^{-1})^{-1}C\}dt\nonumber\\
&=\left\{C \left[\dfrac{\sin(\lambda \pi)}{\pi}\int_0^{\infty}t^{\lambda -1}(1+tA^{-1})^{-1}dt\right] C\right\}\nonumber\\
&=\left\{C A^{\lambda} C\right\} 
\end{align}	
Therefore, 
$$\displaystyle L=\left\{\left\{C(A^{-1}\#B)^{\lambda}C\right\}\left[\left\{CAC\right\}\right]\left\{C(A^{-1}\#B)^{\lambda}C\right\}\right\}.$$ 
By 2.8.6 in \cite{Hanche1984_ja}, 
\begin{align*}
 L&=\left\{C\left\{\left(A^{-1}\#B\right)^{\lambda}A\left(A^{-1}\#B\right)^{\lambda}\right\}C\right\}\\
 &=\{C(A\natural_{\lambda}B)C\}
\end{align*}

For (\ref{item:spgmsmt}), we denote 
\begin{align}
Z&=\left\{\left(B^{-1}\#A\right)^{\lambda-1}\left\{\left(A^{-1}\#B\right)^{\lambda}A \left(A^{-1}\#B\right)^{\lambda}\right\}\left(B^{-1}\#A\right)^{\lambda-1}\right\}\label{eqt:sptrgmniii}
\end{align}
From \cite[Corollary 1 and Proposition 6(iv)]{Wang2021_meanJB}, we know
\begin{align}
Z&=\left\{\left(A^{-1}\# B\right)^{1-\lambda}\left\{\left(A^{-1}\#B\right)^{\lambda}A \left(A^{-1}\#B\right)^{\lambda}\right\}\left(A^{-1}\# B\right)^{1-\lambda}\right\}\nonumber
\end{align}
By Shirshov-Cohen theorem for JB-algebras (see e.g.\, \cite[Theorem 7.2.5]{Hanche1984_ja}),
\begin{align*}
Z=\left\{\left(A^{-1}\#B\right)A\left(A^{-1}\#B\right)\right\}=B.
\end{align*}
Therefore, according to equation \cref{eqt:sptrgmniii}
\begin{align*}
\left\{\left(B^{-1}\#A\right)^{1-\lambda }B\left(B^{-1}\#A\right)^{1-\lambda}\right\}	=\left\{\left(A^{-1}\#B\right)^{\lambda}A \left(A^{-1}\#B\right)^{\lambda}\right\}.
\end{align*}

Proof of (\ref{item:spgmsfdlt}). A same reasoning as in the proof of (iii) implies 
\begin{align*}
(A\natural_{\lambda}B)^{-1}&=\left\{\left(A^{-1}\#B\right)^{\lambda}A\left(A^{-1}\#B\right)^{\lambda}\right\}^{-1}\\ 
&=\left\{\left(A^{-1}\#B\right)^{-\lambda}A^{-1}\left(A^{-1}\#B\right)^{-\lambda}\right\}	\\
&=\left\{\left(A\#B^{-1}\right)^{\lambda}A^{-1}\left(A\#B^{-1}\right)^{\lambda}\right\}\\
&=A^{-1}\natural_{\lambda}B^{-1}.
\end{align*} 
\end{proof}
\medskip

We establish upper and lower bounds for the weighted spectral geometric mean of positive invertible elements in unital JB-algebras. This result finds its roots in \cite[Proposition 2.3]{Kim2015_rltvopetrp}, which pertains to matrices.

\medskip

\begin{proposition}\label{prop:bdwgmn}
Let $A, B$ be two positive invertible elements in ${\mathfrak A}$ and $\lambda\in [0, 1].$ Then
\begin{align*}
2^{1+\lambda}(A+B^{-1})^{-\lambda}-A^{-1}\leq A\natural_{\lambda}B \leq \left[2^{1+\lambda}(A^{-1}+B)^{-\lambda}-A\right]^{-1}.	
\end{align*}
\end{proposition}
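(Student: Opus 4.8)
The plan is to establish the left-hand inequality first and then read off the right-hand inequality from it by the substitution $A\mapsto A^{-1}$, $B\mapsto B^{-1}$. Write $M=A\natural_\lambda B$ and recall from \Cref{prop:unqsptrgm} the defining relation $A^{-1}\#M=(A^{-1}\#B)^\lambda$; this is the identity that couples the spectral geometric mean to the ordinary arithmetic, geometric and harmonic means, and it is what makes the bounds accessible.

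For the lower bound the idea is to place $M$ inside an arithmetic mean and then descend the arithmetic--geometric--harmonic chain. First, the arithmetic--geometric mean inequality of \cite{Wang2021_meanJB} applied to $M$ and $A^{-1}$, together with the symmetry of $\#$ and the defining relation, gives
\[
\tfrac12\bigl(M+A^{-1}\bigr)\ \geq\ M\#A^{-1}=A^{-1}\#M=(A^{-1}\#B)^\lambda .
\]
Next I would bound the right-hand side below: the geometric--harmonic mean inequality yields $A^{-1}\#B\geq 2(A+B^{-1})^{-1}$, the right side being the harmonic mean of $A^{-1}$ and $B$, and applying the operator monotone map $t\mapsto t^\lambda$ for $\lambda\in[0,1]$ gives
\[
(A^{-1}\#B)^\lambda\ \geq\ \bigl(2(A+B^{-1})^{-1}\bigr)^\lambda=2^{\lambda}(A+B^{-1})^{-\lambda}.
\]
Chaining the two displays and multiplying by $2$ produces $M+A^{-1}\geq 2^{1+\lambda}(A+B^{-1})^{-\lambda}$, which is precisely the claimed lower bound.

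To obtain the upper bound I would feed $(A^{-1},B^{-1})$ into the lower bound already proved. Since $(A^{-1})^{-1}=A$ and $(B^{-1})^{-1}=B$, this reads $A^{-1}\natural_\lambda B^{-1}\geq 2^{1+\lambda}(A^{-1}+B)^{-\lambda}-A$, and by \Cref{prop:Spectralgm}\cref{item:spgmsfdlt} the left-hand side equals $(A\natural_\lambda B)^{-1}$. Inverting both sides and using that inversion reverses the order on $\Afk_{++}$ returns exactly $A\natural_\lambda B\leq\bigl[2^{1+\lambda}(A^{-1}+B)^{-\lambda}-A\bigr]^{-1}$.

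Two steps deserve care. The monotonicity of $t\mapsto t^\lambda$ should be justified within the JB-algebra rather than imported from the matrix setting; the integral representation $x^\lambda=\tfrac{\sin\lambda\pi}{\pi}\int_0^\infty t^{\lambda-1}(1+tx^{-1})^{-1}\,dt$ already used in the proof of \Cref{prop:Spectralgm} does exactly this, since $0<X\leq Y$ forces $X^{-1}\geq Y^{-1}$, hence $(1+tX^{-1})^{-1}\leq(1+tY^{-1})^{-1}$ under the integral. The genuinely delicate point is the final inversion: passing from $(A\natural_\lambda B)^{-1}\geq R$ to $A\natural_\lambda B\leq R^{-1}$ requires $R:=2^{1+\lambda}(A^{-1}+B)^{-\lambda}-A$ to be positive invertible, so the upper bound is to be understood on the set where the bracketed element lies in $\Afk_{++}$, whereas every other estimate in the argument is an unconditional order inequality.
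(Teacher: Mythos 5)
Your argument is correct and follows essentially the same route as the paper: the identity $A^{-1}\#(A\natural_\lambda B)=(A^{-1}\#B)^\lambda$ from \Cref{prop:unqsptrgm}, the arithmetic--geometric--harmonic (Young) inequalities of \cite{Wang2021_meanJB}, and operator monotonicity of $t\mapsto t^\lambda$; the paper merely compresses both bounds into a single two-sided chain and defers the remaining algebra to \cite{Kim2015_rltvopetrp}, whereas you obtain the upper bound from the lower one via the self-duality $(A\natural_\lambda B)^{-1}=A^{-1}\natural_\lambda B^{-1}$. Your closing observation that the upper bound presupposes $2^{1+\lambda}(A^{-1}+B)^{-\lambda}-A\in\Afk_{++}$ is a caveat the paper (and its matrix-case source) leaves implicit, and is worth retaining.
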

\begin{proof}
Let $X=A\natural_{\lambda}B$ for $\lambda \in [0, 1].$ By \Cref{prop:unqsptrgm}, Young's inequalities for JB-algebras (see e.g.\, \cite[Theorem 3]{Wang2021_meanJB}) and \cite[Proposition 5]{Wang2021_meanJB}, we have
\begin{align*}
\left(\frac{A+X^{-1}}{2}\right)^{-1}\leq A^{-1}\# X=(A^{-1}\#B)^{\lambda}\leq \left(\frac{A^{-1}+B}{2}\right)^{\lambda}	
\end{align*}
 The desired result follows from similar argument as in the proof of \cite[Proposition 2.3]{Kim2015_rltvopetrp}.	
\end{proof}
\medskip

\begin{theorem}\label{thm:unqsltfRcteqt} {\rm (Cf.\,\cite[Theorem 3.4]{Gan2023_Spectralgmtmn})}
For $\lambda\in (0, 1),$ let $\displaystyle \Gl_{\lambda}: {\mathfrak A}_{++}\times {\mathfrak A}_{++}\to {\mathfrak A}_{++}$ satisfy $\Gl_{\lambda}(A, A)=A$ for all $A\in {\mathfrak A}_{++}$ and
\begin{align}\label{eqt:unqsltgl}
	\Gl_{\lambda}(A, B)=I\Rightarrow B=A^{1-1/\lambda}\, \mbox{for any}\,\, A, B\in {\mathfrak A}_{++}.
\end{align}	
Then $\displaystyle A\natural_{\lambda}B$ is the unique solution $X\in {\mathfrak A}_{++}$ of the euqation
\begin{align*}
\Gl_{\lambda}(A\# X^{-1}, B\#X^{-1})=I.	
\end{align*}
\end{theorem}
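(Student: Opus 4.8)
The plan is to verify existence and then uniqueness separately, using the characterization of the weighted spectral geometric mean from \Cref{prop:unqsptrgm}. First I would check that $X = A\natural_{\lambda}B$ is indeed a solution. The key observation is that the defining equation of \Cref{prop:unqsptrgm}, namely $(A^{-1}\#B)^{\lambda} = A^{-1}\#X$, can be rewritten in terms of the quantities $A\# X^{-1}$ and $B\# X^{-1}$ appearing in $\Gl_{\lambda}$. Using \cite[Corollary 1 and Proposition 6]{Wang2021_meanJB} (the symmetry $P\#Q = Q\#P$ and the inversion identity $(P\#Q)^{-1} = P^{-1}\#Q^{-1}$), I would express $A\#X^{-1}$ and $B\#X^{-1}$ and compute the putative value of $\Gl_{\lambda}(A\#X^{-1}, B\#X^{-1})$. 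The goal is to show that at $X = A\natural_{\lambda}B$ the two arguments $A\#X^{-1}$ and $B\#X^{-1}$ stand in exactly the relation $Q = P^{\,1-1/\lambda}$ forced by hypothesis \cref{eqt:unqsltgl}, so that $\Gl_{\lambda}$ evaluates to $I$ precisely because of the defining property $\Gl_{\lambda}(P,Q)=I \Rightarrow Q = P^{1-1/\lambda}$ read in the existence direction together with $\Gl_{\lambda}(P,P)=P$.

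For the uniqueness, suppose $X\in{\mathfrak A}_{++}$ satisfies $\Gl_{\lambda}(A\#X^{-1}, B\#X^{-1}) = I$. Setting $P = A\#X^{-1}$ and $Q = B\#X^{-1}$, the hypothesis \cref{eqt:unqsltgl} immediately forces $Q = P^{\,1-1/\lambda}$, i.e. $B\#X^{-1} = (A\#X^{-1})^{1-1/\lambda}$. The remaining task is to convert this single scalar-functional relation between two geometric means back into the equation $(A^{-1}\#B)^{\lambda} = A^{-1}\#X$ of \Cref{prop:unqsptrgm}, whose solution is already known to be unique and equal to $A\natural_{\lambda}B$. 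To do this I would use the homogeneity and congruence-invariance of the geometric mean, together with the inversion identity, to manipulate $B\#X^{-1} = (A\#X^{-1})^{1-1/\lambda}$. Concretely, applying $\Ul_{A^{1/2}}$-type transport (via \Cref{prop:Spectralgm}\cref{item:spgmcgrc} and the analogous property for $\#$) should reduce both sides to expressions in the single commuting family generated by $A^{-1}\#B$ and $A^{-1}\#X$, at which point the Shirshov--Cohen theorem (as invoked in the proof of \Cref{prop:Spectralgm}) lets me treat the relevant elements as ordinary commuting functions and solve the resulting scalar identity $z = p^{\,1-1/\lambda}$ for the unknown, yielding $A^{-1}\#X = (A^{-1}\#B)^{\lambda}$.

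The main obstacle I anticipate is the algebraic bookkeeping in passing between the two parametrizations: the hypothesis is stated in terms of $A\#X^{-1}$ and $B\#X^{-1}$, whereas \Cref{prop:unqsptrgm} is stated in terms of $A^{-1}\#X$ and $A^{-1}\#B$. Because the JB-algebra is non-associative, I cannot freely rearrange triple products, and every such manipulation must be justified by an explicit Jordan identity (the fundamental identity \cite[(1.15)]{Alfsen2003_JBa}, the congruence formula $\{C(A^{-1}\#B)C\}^{\lambda} = \{C(A^{-1}\#B)^{\lambda}C\}$ established inside \Cref{prop:Spectralgm}, or Macdonald's / Shirshov--Cohen's theorem to linearize within a special subalgebra). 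The crux is to isolate a subalgebra generated by sufficiently few commuting elements so that Shirshov--Cohen applies and the identity $z = p^{1-1/\lambda}$ can be solved as a genuine functional-calculus statement; once the problem is localized there, the deduction $A^{-1}\#X = (A^{-1}\#B)^{\lambda}$ and hence $X = A\natural_{\lambda}B$ follows directly from \Cref{prop:unqsptrgm}.
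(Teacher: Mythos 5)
Your uniqueness argument is, in essence, the paper's proof: the paper also sets $U=A\#X^{-1}$, $V=B\#X^{-1}$, applies \cref{eqt:unqsltgl} to get $V=U^{1-1/\lambda}$, and then unwinds to recover $X$. The difference is only in the unwinding. The paper goes straight through the Riccati characterization (\Cref{prop:unqgmnJB}): from $U=A\#X^{-1}$ and $V=B\#X^{-1}$ it gets $X=\{U^{-1}AU^{-1}\}=\{V^{-1}BV^{-1}\}$, hence $A=\{U\{V^{-1}BV^{-1}\}U\}=\{U^{1/\lambda}BU^{1/\lambda}\}$ by Macdonald's theorem, hence $U^{1/\lambda}=A\#B^{-1}$ and $X=\left\{(A^{-1}\#B)^{\lambda}A(A^{-1}\#B)^{\lambda}\right\}=A\natural_{\lambda}B$. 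This is shorter than your plan of converting $B\#X^{-1}=(A\#X^{-1})^{1-1/\lambda}$ back into the equation $(A^{-1}\#B)^{\lambda}=A^{-1}\#X$ of \Cref{prop:unqsptrgm}; in particular it never needs to place $A^{-1}\#B$ and $A^{-1}\#X$ into a common associative subalgebra, which is the shakiest point of your outline (those two elements have no a priori reason to operator-commute, so the Shirshov--Cohen localization you invoke is not automatically available). I would recommend the direct Riccati route.

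The genuine gap is in your existence step. Hypothesis \cref{eqt:unqsltgl} is a one-way implication, $\Gl_{\lambda}(A,B)=I\Rightarrow B=A^{1-1/\lambda}$, and it cannot be ``read in the existence direction'': from $\Gl_{\lambda}(P,P)=P$ and \cref{eqt:unqsltgl} alone one cannot conclude $\Gl_{\lambda}(P,P^{1-1/\lambda})=I$. So verifying that $X=A\natural_{\lambda}B$ makes the two arguments stand in the relation $Q=P^{1-1/\lambda}$ does not show that $\Gl_{\lambda}(A\#X^{-1},B\#X^{-1})=I$; with only the stated hypotheses the equation could in principle have no solution at all. (The paper's own proof sidesteps this by proving only that any solution must equal $A\natural_{\lambda}B$, i.e.\ uniqueness; genuine existence would require the converse implication in \cref{eqt:unqsltgl}, as in an ``if and only if'' formulation.) You should either restrict your claim to uniqueness, as the paper implicitly does, or note that the additional converse hypothesis is needed for the existence half.
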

\begin{proof}
Let $U=A\# X^{-1}$ and $V=B\# X^{-1}.$ Then $\displaystyle V=U^{1-1/{\lambda}}$ since $\displaystyle G_{\lambda}(U,V)=I.$  \Cref{prop:unqgmnJB} indicates $\displaystyle X=\{U^{-1}AU^{-1}\}=\{V^{-1}BV^{-1}\}.$ Thus, 
\begin{align*}
A&=\left\{U\left\{V^{-1}BV^{-1}\right\}U\right\}=\{U^{1/\lambda}BU^{1/\lambda}\}
\end{align*}
where the second equation follows from Macdonald Theorem.
Again by \Cref{prop:unqgmnJB}, we know that $\displaystyle U^{1/\lambda}=A\# B^{-1}.$ Therefore, 
\begin{align*}
X=\left\{(A\#B^{-1})^{-\lambda}A(A\#B^{-1})^{-\lambda}\right\}=\left\{(A^{-1}\#B)^{\lambda}A(A^{-1}\#B)^{\lambda}\right\}=A\natural_{\lambda}B	
\end{align*}
where the second equation follows from \cite[Proposition 6(v)]{Wang2021_meanJB}.
\end{proof}

\section{Two-variable Lie-Trotter mean}\label{sct:2LTtmn}
The following result extends \cite[Proposition 1.1]{Ahn2007_LieTrtfml} to the setting of unital JB-algebras. 
\medskip

\begin{proposition}\label{prpst:dfrttrtfml}
For any differentiable curve $\gamma: (-\varepsilon, \varepsilon)\to {\mathfrak A}_{++}$ with $\gamma(0)=I,$
\begin{align*}
e^{\gamma'(0)}=\lim_{t\to 0}\gamma(t)^{1/t}=\lim_{n\to \infty}\gamma(1/n)^{n}.	
\end{align*}	
\end{proposition}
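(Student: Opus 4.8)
The plan is to prove the two equalities $e^{\gamma'(0)}=\lim_{t\to 0}\gamma(t)^{1/t}$ and $\lim_{t\to 0}\gamma(t)^{1/t}=\lim_{n\to\infty}\gamma(1/n)^n$ by reducing everything to a statement about the logarithm. Since $\gamma(0)=I$ and $\gamma$ is continuous into the open cone $\Afk_{++}$, for $t$ near $0$ the element $\gamma(t)$ is positive invertible and close to $I$, so $\log$ is well-defined and continuous there via the functional calculus in the JB-algebra. The key object is the curve $t\mapsto \log\gamma(t)$, which satisfies $\log\gamma(0)=\log I=0$. I would first argue that $t\mapsto\log\gamma(t)$ is differentiable at $0$ with derivative $\gamma'(0)$: because $\log$ is analytic near $I$ with derivative the identity map at $I$ (the first-order expansion $\log(I+H)=H+o(\norm{H})$ holds in any Banach algebra, and the JB-functional calculus respects this), the chain rule gives
\begin{align*}
\frac{d}{dt}\Big|_{t=0}\log\gamma(t)=D\log(I)\big[\gamma'(0)\big]=\gamma'(0).
\end{align*}

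Next I would rewrite the powers using the functional calculus identity $\gamma(t)^{s}=\exp\big(s\log\gamma(t)\big)$, valid since $\gamma(t)$ is positive invertible. Thus
\begin{align*}
\gamma(t)^{1/t}=\exp\!\left(\frac{\log\gamma(t)}{t}\right).
\end{align*}
By the differentiability established above and $\log\gamma(0)=0$, the difference quotient satisfies
\begin{align*}
\lim_{t\to 0}\frac{\log\gamma(t)}{t}=\lim_{t\to 0}\frac{\log\gamma(t)-\log\gamma(0)}{t}=\gamma'(0),
\end{align*}
with convergence in the norm of $\Afk$. Since $\exp$ is norm-continuous on $\Afk$, applying it to both sides yields $\lim_{t\to0}\gamma(t)^{1/t}=\exp(\gamma'(0))=e^{\gamma'(0)}$. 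This handles the first equality. For the second, I would specialize to the sequence $t=1/n$, which tends to $0$ as $n\to\infty$; then $\gamma(1/n)^{n}=\exp\big(n\log\gamma(1/n)\big)$, and $n\log\gamma(1/n)\to\gamma'(0)$ is simply the sequential version of the same difference-quotient limit, so the continuity of $\exp$ again delivers $\lim_{n\to\infty}\gamma(1/n)^n=e^{\gamma'(0)}$.

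The main obstacle is justifying the differentiability of $t\mapsto\log\gamma(t)$ at $0$ and the resulting norm convergence of $\frac{1}{t}\log\gamma(t)$ in the non-associative JB-setting, since one cannot invoke holomorphic functional calculus for a single operator on a Hilbert space directly. I would address this by working inside the associative JB-subalgebra (the closed subalgebra generated by $\gamma(t)$ and $I$, which is associative for a single element) and using the continuous/analytic functional calculus available there, or equivalently by the power-series expansion $\log(I+H)=\sum_{k\ge1}\frac{(-1)^{k-1}}{k}H^{k}$ which converges for $\norm{H}<1$ and gives the first-order estimate $\norm{\log(I+H)-H}\le C\norm{H}^2$ uniformly for small $H$. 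Setting $H=\gamma(t)-I=t\gamma'(0)+o(t)$ then yields $\log\gamma(t)=t\gamma'(0)+o(t)$ in norm, which is exactly what the two limits require. A secondary point worth a line is the uniform control needed to pass from the pointwise first-order expansion of $\log$ to the quotient limit, but this follows from the remainder bound above together with $\norm{\gamma(t)-I}\to0$.
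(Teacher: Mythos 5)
Your proposal is correct and follows essentially the same route as the paper: reduce to the curve $t\mapsto\log\gamma(t)$, identify $\lim_{t\to0}\frac{1}{t}\log\gamma(t)=\gamma'(0)$ as a difference quotient, rewrite $\gamma(t)^{1/t}=\exp\bigl(\frac{1}{t}\log\gamma(t)\bigr)$, and apply the continuity of $\exp$. The only difference is that you spell out the justification of $(\log\circ\gamma)'(0)=\gamma'(0)$ via the power-series remainder bound, which the paper leaves implicit by citing the diffeomorphism property of $\log$ and $\exp$.
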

\begin{proof}
The exponential function: $\displaystyle \exp: {\mathfrak A}\to {\mathfrak A}_{++}$ and logarithmic function $\displaystyle \log: {\mathfrak A}_{++}\to {\mathfrak A}$ are well-defined and diffeomorphic. Following a similar argument as in the proof of \cite[Proposition 1.1]{Ahn2007_LieTrtfml}, we have	
\begin{align*}
\gamma'(0)&=(\log\circ \gamma)'(0)=\lim\limits_{t\to 0}\frac{\log(\gamma(t))-\log(\gamma(0))}{t}=\lim\limits_{t\to 0}\frac{\log(\gamma(t))}{t}\\
&=\lim\limits_{t\to 0}\log(\gamma(t)^{1/t})=\lim\limits_{n\to \infty}\log(\gamma(1/n)^{n}).
\end{align*}
Therefore, $\displaystyle \exp\{\gamma'(0)\}=\lim\limits_{t\to 0}\gamma(t)^{1/t}=\lim\limits_{n\to \infty}\gamma(1/n)^{n}.$
\end{proof}

\medskip

\begin{definition}\label{def:wt2mn}
Let $\lambda\in (0, 1).$ A {\bf weighted $2$-mean} in a unital JB-algebra $\Afk$ is  a map 
$$\displaystyle G_2(1-\lambda, \lambda;\,\cdot \,): \Afk_{++}\times \Afk_{++} \to \Afk_{++}$$ satisfying $\displaystyle G_2(1-\lambda, \lambda; A, A)=A,$ for any $A\in {\mathfrak A}_{++}.$
\end{definition}

\medskip

\begin{definition}\label{def:2lietrtmn}
Let $\lambda \in (0, 1).$ A {\bf two-variable Lie-Trotter mean}  in a unital JB-algebra $\Afk$ is a weighted 2-mean $\displaystyle G_2(1-\lambda, \lambda;\,\cdot \,): \Afk_{++}^2\to \Afk_{++}$ such that it is differentiable and satisfies 
\begin{align}\label{eqt:2lietrtfml}
\lim\limits_{t\to 0}G_2(1-\lambda, \lambda;\gamma_1(t), \gamma_2(t))	^{1/t}=\exp[(1-t)\gamma_1'(0)+t\gamma_2'(0)],
\end{align}
where $\displaystyle \gamma_1, \gamma_2:(-\varepsilon, \varepsilon)\to \Afk_{++}$ are differentiable curves with $\displaystyle  \gamma_1(0)=\gamma_2(0)=I.$
\end{definition}

\medskip

\begin{example}\label{exmp:2lietrtmn}
Let $\displaystyle \gamma_1, \gamma_2:(-\varepsilon, \varepsilon)\to {\mathfrak A}_{++}$ be two differentiable curves with $\displaystyle  \gamma_1(0)=\gamma_2(0)=I.$	We denote
\begin{align*}\label{eqt:armhmgmsptrgm}
\Gamma_1(t)&=\Al_2(1-\lambda, \lambda; \gamma_1(t), \gamma_2(t))=(1-\lambda)\gamma_1(t)+\lambda \gamma_2(t)\\	
\Gamma_2(t)&=\Hl_2(1-\lambda, \lambda; \gamma_1(t), \gamma_2(t))=[(1-\lambda)\gamma_1(t)^{-1}+\lambda \gamma_2(t)^{-1}]^{-1}\\
\Gamma_3(t)&=\Gl_2(1-\lambda, \lambda; \gamma_1(t), \gamma_2(t))=\gamma_1(t)\#_{\lambda}\gamma_2(t)\\
\Gamma_4(t)&=\Fl_2(1-\lambda, \lambda; \gamma_1(t), \gamma_2(t))=\gamma_1(t)\natural_{\lambda}\gamma_2(t)
\end{align*}  
We observe that $\Gamma_k$'s are differentiable curves with $\Gamma_k'(0)=(1-\lambda)\gamma_1'(0)+\lambda \gamma_2'(0),$ for any $1\leq k\leq 4.$ By \Cref{prpst:dfrttrtfml},
\begin{align*}
\exp[(1-\lambda)\gamma_1'(0)+\lambda \gamma_2'(0)]&=\lim\limits_{t\to 0}[(1-\lambda)\gamma_1(t)+\lambda \gamma_2(t)]^{1/t}\\
&= \lim\limits_{t\to 0}[(1-\lambda)\gamma_1(t)^{-1}+\lambda \gamma_2(t)^{-1}]^{-1/t}\\
&= \lim\limits_{t\to 0}[\gamma_1(t)\#_{\lambda}\gamma_2(t)]^{1/t}\\
&= \lim\limits_{t\to 0}[\gamma_1(t)\natural_{\lambda}\gamma_2(t)]^{1/t}
\end{align*}
Therefore, the weighted arithmetic mean, weighted harmonic mean, weighted geometric mean, and weighted spectral geometric mean are two-variable Lie-Trotter means in ${\mathfrak A}$. 
\end{example} 

\medskip

The following result presents generalized Lie-Trotter formulas in JB-algebras.

\medskip

\begin{corollary}
Let $A, B$ be two elements in a unital JB-algebra ${\mathfrak A}$. Then
\begin{align*}
\exp[(1-\lambda)A+\lambda B]&=\lim\limits_{t\to 0}\left[(1-\lambda)e^{tA}+\lambda e^{tB}\right ]^{\frac{1}{t}}
=\lim\limits_{n\to \infty}\left[(1-\lambda)e^{\frac{A}{n}}+\lambda e^{\frac{B}{n}}\right ]^{n}\\
&= \lim\limits_{t\to 0}\left[(1-\lambda)e^{-tA}+\lambda e^{-tB}\right]^{-\frac{1}{t}}
=\lim\limits_{n\to \infty}\left[(1-\lambda)e^{-\frac{A}{n}}+\lambda e^{-\frac{B}{n}}\right]^{-n}\\
&= \lim\limits_{t\to 0}\left(e^{tA}\#_{\lambda}e^{tB}\right)^{\frac{1}{t}}
=\lim\limits_{n\to \infty}\left(e^{\frac{A}{n}}\#_{\lambda}e^{\frac{B}{n}}\right)^{n}\\
&= \lim\limits_{t\to 0}\left(e^{tA}\natural_{\lambda}e^{tB}\right)^{1/t}
= \lim\limits_{n\to \infty}\left(e^{\frac{A}{n}}\natural_{\lambda}e^{\frac{B}{n}}\right)^{n}
\end{align*}	
\end{corollary}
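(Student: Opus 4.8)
The plan is to derive this corollary as an immediate specialization of \Cref{exmp:2lietrtmn}, taking $\gamma_1(t)=e^{tA}$ and $\gamma_2(t)=e^{tB}$ for $t\in(-\varepsilon,\varepsilon)$.

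First I would verify the hypotheses of \Cref{exmp:2lietrtmn}. Both curves map into $\Afk_{++}$, since the exponential of any element is positive invertible, and $\gamma_1(0)=\gamma_2(0)=e^0=I$. For the derivatives, the key observation is that $e^{tA}$ lies in the associative JB-subalgebra generated by $\{A,I\}$, so ordinary power-series differentiation is valid there and yields $\gamma_1'(0)=A$; symmetrically $\gamma_2'(0)=B$. This is the only place where the non-associativity of $\Afk$ could intrude, and it is handled simply by passing to this special subalgebra, so it is not a genuine obstacle.

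With the hypotheses in hand, \Cref{exmp:2lietrtmn} applies directly. Reading off its conclusion for the four curves $\Gamma_1,\dots,\Gamma_4$ (arithmetic, harmonic, geometric, and spectral geometric) gives
\begin{align*}
\exp[(1-\lambda)A+\lambda B]
&=\lim_{t\to 0}\big[(1-\lambda)e^{tA}+\lambda e^{tB}\big]^{1/t}
=\lim_{t\to 0}\big[(1-\lambda)e^{-tA}+\lambda e^{-tB}\big]^{-1/t}\\
&=\lim_{t\to 0}\big(e^{tA}\#_\lambda e^{tB}\big)^{1/t}
=\lim_{t\to 0}\big(e^{tA}\natural_\lambda e^{tB}\big)^{1/t},
\end{align*}
where for the harmonic mean the exponent $-1/t$ absorbs the inversion defining $\Gamma_2$, since $\gamma_1(t)^{-1}=e^{-tA}$ and $\gamma_2(t)^{-1}=e^{-tB}$.

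Finally, the four discrete ($n\to\infty$) formulas follow by restricting each limit to the sequence $t=1/n$. Applying the second equality of \Cref{prpst:dfrttrtfml} to each differentiable curve $\Gamma_k$ gives $\lim_{t\to 0}\Gamma_k(t)^{1/t}=\lim_{n\to\infty}\Gamma_k(1/n)^{n}$, and substituting $\gamma_1(1/n)=e^{A/n}$ and $\gamma_2(1/n)=e^{B/n}$ reproduces exactly the discrete expressions in the statement. No substantial difficulty arises; the whole proof is a specialization of the two-variable Lie-Trotter mean property already established in \Cref{exmp:2lietrtmn}.
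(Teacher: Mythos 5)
Your proof is correct and follows essentially the same route as the paper: both set $\gamma_1(t)=e^{tA}$, $\gamma_2(t)=e^{tB}$, note $\gamma_1'(0)=A$, $\gamma_2'(0)=B$, and invoke \Cref{exmp:2lietrtmn} (with \Cref{prpst:dfrttrtfml} supplying the $n\to\infty$ versions). Your added remarks on positivity of the exponentials and differentiation within the associative subalgebra generated by $\{A,I\}$ are just extra justification of the same argument.
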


\begin{proof}
Let $\displaystyle \gamma_1(t)=e^{tA}$ and $\displaystyle \gamma_2(t)=e^{tB}$ where $A, B\in {\mathfrak A}.$ Then $\displaystyle \gamma_1'(0)=A$ and $\displaystyle \gamma_2'(0)=B.$ Then our results follows directly from \Cref{exmp:2lietrtmn}.
\end{proof}

\section{Multivariate Lie-Trotter mean}\label{sct:nLTtmn}
Let $\omega=(\omega_1,\cdots,\omega_n)\in \Delta_n,$ the simplex of positive probability vectors in $\Rdb^n.$

\medskip

\begin{definition}\label{defn:wgtnmn}
A {\bf weighted $n$-mean	} $G_n$ on ${\mathfrak A}_{++}$ for $n\geq 2$ is a map
 $\displaystyle G_n(\omega;\cdot):{\mathfrak A}_{++}^n\longrightarrow {\mathfrak A}_{++}$ satisfying $\displaystyle G_n(\omega;A,\cdots,A)=A$ for any $A\in {\mathfrak A}_{++}.$ The weighted $n$-mean  $\displaystyle G_n(\omega;\cdot)$ is called a {\bf multivariate Lie-Trotter mean} if it satisfies 
\begin{equation}
\lim_{t\to 0} G_n(\omega;\gamma_1(t),\cdots, \gamma_n(t))^{1/t}=\exp\left(\sum_{k=1}^n\omega_k\gamma_k'(0)\right),	\label{eqt:gnrlzLTrt}
\end{equation}

where $\displaystyle \gamma_{k}:(-\varepsilon, \varepsilon)\to {\mathfrak A}_{++}$ is differentiable with $\gamma_{k}(0)=I,$ for any $1\leq k\leq n.$ 
\end{definition}

For $\Adb=(A_1, A_2,\cdot, A_n)\in {\mathfrak A}_{++}^n,$ $t\in\Rdb$ and any invertible element $C$ in ${\mathfrak A},$ we denote
\begin{align*}
\Adb^t &:=(A_1^t, A_2^t,\cdots,A_n^t)\\
\left\{C\Adb C \right\}&:=	\left(\left\{CA_1C\right\}, \left\{CA_2C\right\},\cdots, \left\{CA_nC\right\}\right).
\end{align*}
\subsection{Sagae-Tanabe mean}
\begin{definition}
Let $\displaystyle \omega=(\omega_1, \omega_2, \cdots, \omega)\in \Delta_n$ and $\Adb=(A_1, A_2,\cdots, A_n)\in {\mathfrak A}_{++}^n.$ Assume that a weighted 2-mean $S_2$ is given. Then the associated {\bf Sagae-Tanabe mean} in JB-algebras is defined inductively as
\begin{align*}
S_n(\omega; \Adb)=S_2(1-\omega_n, \omega_n; S_{n-1}(\hat{\omega};A_1,\cdots, A_{n-1}), A_n)	
\end{align*}
for $n\geq 3,$ where $\displaystyle \hat{\omega}=\frac{1}{1-\omega_n}(\omega_1,\cdots, \omega_{n-1})\in \Delta_{n-1}.$	
\end{definition}

\medskip

\begin{examples}
Let the weighted 2-mean  $S_2$ be the weighted arithmetic mean $\Al_2$ , the weighted harmonic mean $\Hl_2,$ the weighted geometric mean $\Gl_2$ , and the weighted spectral geometric mean $\Fl_2$ respectively. Then by induction, the corresponding Sagae-Tanble means are 
\begin{enumerate}[{\rm(1)}]
\item $\displaystyle \Al_n(\omega; \Adb):=\sum_{k=1}^n \omega_k A_k;$
\item $\displaystyle \Hl_n(\omega; \Adb):=\left(\sum_{k=1}^n \omega_k A_k^{-1}\right)^{-1};$
\item $\displaystyle \Gl_n^S(\omega; \Adb):=\left[\left(A_1\#_{\frac{\omega_2}{\omega_1+\omega_2}}A_2\right)\#_{\frac{\omega_3}{\omega_1+\omega_2+\omega_3}}\cdots\right]\#_{\omega_n}A_n;$
\item $\displaystyle \Fl_n^S(\omega; \Adb):=\left[\left(A_1\natural_{\frac{\omega_2}{\omega_1+\omega_2}}A_2\right)\natural_{\frac{\omega_3}{\omega_1+\omega_2+\omega_3}}\cdots\right]\natural_{\omega_n}A_n.$
\end{enumerate}	
\end{examples}

\medskip

The following result is \cite[Theorem 3.1]{Hwang2017_LTrtmn} in the setting of JB-algebras.
\medskip

\begin{theorem}\label{thm:STmnLTrt}
Let $S_2$ be a Lie-Trotter mean. Then the Sagae-Tanabe mean $S_n$ for $\displaystyle n\geq 2$ is a multivariate Lie-Trotter mean. 	
\end{theorem}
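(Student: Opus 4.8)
The plan is to prove the statement by induction on $n$, using the inductive definition of the Sagae-Tanabe mean together with the hypothesis that $S_2$ is a two-variable Lie-Trotter mean. The base case $n=2$ is exactly the hypothesis. For the inductive step, I would assume that $S_{n-1}(\hat\omega;\,\cdot\,)$ is a multivariate Lie-Trotter mean and show that $S_n(\omega;\,\cdot\,)$ inherits the property.

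First I would fix differentiable curves $\gamma_1,\dots,\gamma_n:(-\varepsilon,\varepsilon)\to\Afk_{++}$ with $\gamma_k(0)=I$. The key idea is to introduce the auxiliary curve
\begin{align*}
\Gamma(t):=S_{n-1}(\hat\omega;\gamma_1(t),\dots,\gamma_{n-1}(t)),
\end{align*}
so that by the recursive definition $S_n(\omega;\gamma_1(t),\dots,\gamma_n(t))=S_2(1-\omega_n,\omega_n;\Gamma(t),\gamma_n(t))$. I would then argue that $\Gamma$ is itself a differentiable curve into $\Afk_{++}$ with $\Gamma(0)=S_{n-1}(\hat\omega;I,\dots,I)=I$ (using the defining property of a weighted mean). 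The crucial point is to compute $\Gamma'(0)$: applying the inductive hypothesis via \Cref{prpst:dfrttrtfml} to $\Gamma$, one reads off from the Lie-Trotter property of $S_{n-1}$ that $\Gamma'(0)=\sum_{k=1}^{n-1}\hat\omega_k\gamma_k'(0)$. Indeed, \Cref{prpst:dfrttrtfml} identifies $\Gamma'(0)$ as the exponent appearing in $\lim_{t\to0}\Gamma(t)^{1/t}$, which by the multivariate Lie-Trotter formula for $S_{n-1}$ equals $\exp(\sum_{k=1}^{n-1}\hat\omega_k\gamma_k'(0))$.

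Next I would apply the two-variable Lie-Trotter property of $S_2$ to the two curves $\Gamma$ and $\gamma_n$ with weight $\omega_n$. This yields
\begin{align*}
\lim_{t\to0}S_2(1-\omega_n,\omega_n;\Gamma(t),\gamma_n(t))^{1/t}=\exp\big[(1-\omega_n)\Gamma'(0)+\omega_n\gamma_n'(0)\big].
\end{align*}
Substituting $\Gamma'(0)=\sum_{k=1}^{n-1}\hat\omega_k\gamma_k'(0)$ and recalling $\hat\omega_k=\frac{\omega_k}{1-\omega_n}$, the coefficient of $\gamma_k'(0)$ for $k\le n-1$ becomes $(1-\omega_n)\cdot\frac{\omega_k}{1-\omega_n}=\omega_k$, so the exponent collapses to $\sum_{k=1}^n\omega_k\gamma_k'(0)$, which is precisely \cref{eqt:gnrlzLTrt} for $S_n$. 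This completes the inductive step once one also checks that $S_n$ is a genuine weighted $n$-mean, i.e.\ $S_n(\omega;A,\dots,A)=A$, which follows immediately from the recursion and the weighted-mean property of $S_2$ and $S_{n-1}$.

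The main obstacle I anticipate is the verification that $\Gamma$ is differentiable and the clean identification of $\Gamma'(0)$. Differentiability of $\Gamma$ requires that $S_{n-1}$ be differentiable as a map, which is part of what must be carried along in the induction (so the inductive statement should really assert that each Sagae-Tanabe iterate is a differentiable weighted mean satisfying the Lie-Trotter formula). The identification $\Gamma'(0)=\sum_{k=1}^{n-1}\hat\omega_k\gamma_k'(0)$ is the heart of the argument: it is exactly where \Cref{prpst:dfrttrtfml} converts the limit form of the Lie-Trotter property back into a derivative at $0$, letting the induction hypothesis feed into the two-variable step. The remaining weight-bookkeeping with $\hat\omega$ is routine algebra.
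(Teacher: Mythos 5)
Your proposal is correct and follows essentially the same route as the paper: induction on $n$, introduction of the auxiliary curve $\Gamma(t)=S_{n-1}(\hat\omega;\gamma_1(t),\dots,\gamma_{n-1}(t))$, identification of $\Gamma'(0)=\sum_{k=1}^{n-1}\frac{\omega_k}{1-\omega_n}\gamma_k'(0)$ via \Cref{prpst:dfrttrtfml}, and then the two-variable Lie-Trotter property of $S_2$ applied to $\Gamma$ and $\gamma_n$. Your explicit remark that differentiability of $\Gamma$ must be carried along in the induction is a point the paper leaves implicit.
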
 
\begin{proof}
Let $\displaystyle \gamma_1,\cdots, \gamma_n: (-\varepsilon, \varepsilon)\to {\mathfrak A}_{++}$ be differentiable curves with $\gamma_k(0)=I$ for all $1\leq k\leq n.$	By induction, we suppose that $S_{n-1}$ is a multivariate Lie-Trotter mean, then 
\begin{align*}
\lim_{t\to 0}S_{n-1}(\hat{\omega};\gamma_1(t), \gamma_2(t),\cdots, \gamma_{n}(t))^{1/t}=\exp\left(\sum_{k=1}^{n-1}\frac{\omega_k}{1-\omega_n}\gamma_k'(0)\right).	
\end{align*}
Let $\displaystyle \gamma(t):=S_{n-1}\left(\hat{\omega};\gamma_1(t), \gamma_2(t),\cdots, \gamma_{n-1}(t)\right).$ Then by \Cref{prpst:dfrttrtfml}
\begin{align*}
\gamma'(0)=\log \left[\exp\left(\sum_{k=1}^{n-1}\frac{\omega_k}{1-\omega_n}\gamma_k'(0)\right)\right]=	\sum_{k=1}^{n-1}\frac{\omega_k}{1-\omega_n}\gamma_k'(0).
\end{align*}
Since $S_2$ is a Lie-Trotter mean,
\begin{align*}
\lim_{t\to 0}S_{n}\left(\omega;\gamma_1(t), \cdots, \gamma_{n}(t)\right)^{1/t}&=\lim_{t\to 0}S_{2}\left(1-\omega_n, \omega_n; \gamma(t), \gamma_n(t)\right)^{1/t}\\
&=\exp\left((1-\omega_n)\gamma'(0)+\omega_n \gamma_n'(0)\right)\\
&=\exp\left[(1-\omega_n)\left(\sum_{k=1}^{n-1}\frac{\omega_k}{1-\omega_n}\gamma_k'(0)\right)+\omega_n\gamma_n'(0)\right]\\
&=\exp \left(\sum_{k=1}^{n}\omega_k\gamma_k'(0)\right).
\end{align*}
\end{proof}

\medskip

\begin{corollary}\label{crlr:mLTrtmn}
Let $A_k$ be positive invertible elements in a unital JB-algebra $\Afk$ and $t\in \Rdb.$ Then $\displaystyle \Al_n(\omega; \Adb^t), \Hl_n(\omega; \Adb^t), \Gl_n^S(\omega; \Adb^t)$ and $\Fl_n^S(\omega; \Adb^t)$ are multivariate Lie-Trotter means. 
\end{corollary}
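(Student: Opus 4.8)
The plan is to recognize the four multivariate means as instances of the Sagae--Tanabe construction and then invoke \Cref{thm:STmnLTrt}. First I would observe that $\Al_n$, $\Hl_n$, $\Gl_n^S$, and $\Fl_n^S$ are precisely the Sagae--Tanabe means $S_n$ whose base weighted $2$-mean $S_2$ is, respectively, the weighted arithmetic mean $\Al_2$, the weighted harmonic mean $\Hl_2$, the weighted geometric mean $\Gl_2$, and the weighted spectral geometric mean $\Fl_2$; this is exactly the content of the examples computed immediately before the statement.

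Next I would appeal to \Cref{exmp:2lietrtmn}, which established that each of $\Al_2$, $\Hl_2$, $\Gl_2$, and $\Fl_2$ is a two-variable Lie--Trotter mean. With the hypothesis of \Cref{thm:STmnLTrt} thereby verified for each base mean, that theorem yields at once that the associated Sagae--Tanabe means $\Al_n$, $\Hl_n$, $\Gl_n^S$, and $\Fl_n^S$ are multivariate Lie--Trotter means for every $n\geq 2$.

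Finally, to read off the concrete Lie--Trotter formulas, I would specialize the defining property \cref{eqt:gnrlzLTrt} to the curves $\gamma_k(t)=A_k^t=\exp(t\log A_k)$. These are differentiable, satisfy $\gamma_k(0)=I$, and have $\gamma_k'(0)=\log A_k$, so for each multivariate mean $G_n\in\{\Al_n,\Hl_n,\Gl_n^S,\Fl_n^S\}$ the multivariate Lie--Trotter property gives $\lim_{t\to 0}G_n(\omega;\Adb^t)^{1/t}=\exp\bigl(\sum_{k=1}^n\omega_k\log A_k\bigr)$, which is the explicit formula the corollary packages.

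Since every ingredient is already in hand, there is no genuine obstacle here; the statement is a direct corollary of \Cref{thm:STmnLTrt} and \Cref{exmp:2lietrtmn}. The only points meriting a line of care are the base case $n=2$ of the induction implicit in \Cref{thm:STmnLTrt}, which is supplied verbatim by \Cref{exmp:2lietrtmn}, and the verification that $t\mapsto A_k^t$ is differentiable with derivative $\log A_k$ at the origin, which follows from the functional calculus together with the diffeomorphism property of $\exp$ and $\log$ recorded in \Cref{prpst:dfrttrtfml}.
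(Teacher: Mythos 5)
Your proposal matches the paper's own proof: the paper likewise identifies the four means as Sagae--Tanabe means built from the base $2$-means of \Cref{exmp:2lietrtmn}, applies \Cref{thm:STmnLTrt}, and specializes to the curves $\gamma_k(t)=A_k^t$ with $\gamma_k'(0)=\log A_k$. No gaps; your argument is correct and takes essentially the same route.
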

\begin{proof}
Let $\displaystyle \gamma_k(t)=A_k^t$ for any $1\leq k\leq n.$ Then $\gamma_k'(0)=\log A_k.$ {\rm \Cref{thm:STmnLTrt}} implies that
\begin{align*}
\exp\left(\sum_{k=1}^n \omega_k \log A_k\right)&=\lim_{t\to 0}\Al_n(\omega; \Adb^t)^{1/t}=\lim_{t\to 0}\Hl_n(\omega; \Adb^t)^{1/t}\\
&=\lim_{t\to 0}\Gl_n^S(\omega; \Adb^t)^{1/t}=\lim_{t\to 0}\Fl_n^S(\omega; \Adb^t)^{1/t}.	
\end{align*}		
\end{proof}

\medskip

The following result is generalized Lie-Trotter formulas for an arbitrary finite number of elements in JB-algebras.
\medskip

\begin{corollary}\label{cor:gLTrf}
Let $\omega=(\omega_1,\cdots,\omega_n)\in \Delta_n.$ For any finite number of elements $A_1, A_2,\cdots, A_n$ in a unital JB-algebra $\Afk,$ 
\begin{align*}
\exp\left(\sum_{k=1}^n \omega_k A_k\right)&=\lim\limits_{t\to 0}\left(\sum_{k=1}^n \omega_k e^{t A_k} \right)^{1/t}\\
&=\lim\limits_{t\to 0} \left(\sum_{k=1}^n \omega_k e^{-tA_k}\right)^{-1/t}\\
&= \lim\limits_{t\to 0}\left\{\left[\left(e^{tA_1}\#_{\frac{\omega_2}{\omega_1+\omega_2}}e^{tA_2}\right)\#_{\frac{\omega_3}{\omega_1+\omega_2+\omega_3}}\cdots\right]\#_{\omega_n}e^{tA_n}\right\}^{1/t}\\
&= \lim\limits_{t\to 0}\left\{\left[\left(e^{tA_1}\natural_{\frac{\omega_2}{\omega_1+\omega_2}}e^{tA_2}\right)\natural_{\frac{\omega_3}{\omega_1+\omega_2+\omega_3}}\cdots\right]\natural_{\omega_n}e^{tA_n}\right\}^{1/t}
\end{align*}	
\end{corollary}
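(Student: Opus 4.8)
The plan is to reduce this corollary directly to the multivariate Lie-Trotter property already established in \Cref{crlr:mLTrtmn}, exactly as the two-variable \Cref{cor:gLTrf}'s analogue was reduced to \Cref{exmp:2lietrtmn}. First I would fix the elements $A_1,\dots,A_n$ in $\Afk$ and introduce the curves $\gamma_k(t)=e^{tA_k}$ for $1\le k\le n$. These are differentiable curves into $\Afk_{++}$ (since $\exp$ maps into the positive invertible cone) satisfying $\gamma_k(0)=e^{0}=I$, so they are admissible test curves for \Cref{defn:wgtnmn}, and by the chain rule together with the diffeomorphism property of $\exp$ recorded in the proof of \Cref{prpst:dfrttrtfml}, one has $\gamma_k'(0)=A_k$.

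Next I would observe that $\Adb^t=(A_1^t,\dots,A_n^t)$ is precisely $(\gamma_1(t),\dots,\gamma_n(t))$ when the base points $A_k$ are replaced by $e^{A_k}$; more cleanly, I simply apply \Cref{crlr:mLTrtmn} with these curves. That corollary asserts that $\Al_n$, $\Hl_n$, $\Gl_n^S$ and $\Fl_n^S$ are all multivariate Lie-Trotter means, which by \Cref{defn:wgtnmn} means that for each of them the limit $\lim_{t\to 0}G_n(\omega;\gamma_1(t),\dots,\gamma_n(t))^{1/t}$ equals $\exp\bigl(\sum_{k=1}^n\omega_k\gamma_k'(0)\bigr)$. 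Substituting $\gamma_k'(0)=A_k$ turns the right-hand side into the common value $\exp\bigl(\sum_{k=1}^n\omega_k A_k\bigr)$, which is exactly the left-hand side of the claimed chain of equalities.

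Finally I would read off the four displayed limits by unwinding the definitions of the four means on the curves $\gamma_k(t)=e^{tA_k}$: $\Al_n$ produces $\sum_k\omega_k e^{tA_k}$, the mean $\Hl_n$ produces $\bigl(\sum_k\omega_k e^{-tA_k}\bigr)^{-1}$ after the reparametrisation $t\mapsto -t$ that also appears in the two-variable corollary, and the iterated geometric and spectral geometric means $\Gl_n^S$, $\Fl_n^S$ produce the nested $\#$- and $\natural$-expressions displayed in the statement. For the harmonic case I would note, as in \Cref{exmp:2lietrtmn}, that replacing $t$ by $-t$ is harmless because $\gamma_k'(0)$ merely changes sign in pairs and the $\exp$ on the right is recovered; alternatively one invokes the Lie-Trotter property directly for the curves $e^{-tA_k}$ and takes the $-1/t$ power.

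The only point requiring a word of care—and the place I would expect a referee to look—is the harmonic exponent $-1/t$ versus $1/t$, i.e. checking that the sign conventions in the reparametrisation $t\mapsto -t$ reconstitute $\exp\bigl(\sum_k\omega_k A_k\bigr)$ rather than its inverse; but this is the identical bookkeeping already carried out in the two-variable \Cref{exmp:2lietrtmn}, so it presents no genuine obstacle. Everything else is a direct specialization of \Cref{crlr:mLTrtmn}, and the proof is essentially immediate.
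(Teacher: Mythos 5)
Your proposal is correct and matches the paper's intended argument: the corollary is stated without a separate proof precisely because it is the specialization of \Cref{crlr:mLTrtmn} (equivalently \Cref{thm:STmnLTrt}) to the curves $\gamma_k(t)=e^{tA_k}$, with $\gamma_k(0)=I$ and $\gamma_k'(0)=A_k$, exactly as you describe. The only cosmetic remark is that the harmonic exponent $-1/t$ needs no reparametrisation $t\mapsto -t$ at all: $\Hl_n(\omega;e^{tA_1},\dots,e^{tA_n})=\bigl(\sum_k\omega_k e^{-tA_k}\bigr)^{-1}$ by definition, and its $1/t$-th power is already the displayed $\bigl(\sum_k\omega_k e^{-tA_k}\bigr)^{-1/t}$.
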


\subsection{Hansen's inductive mean}

\begin{definition}\label{def:Hasenim} 
Given a weighted $2$-mean $H_2$, the associated {\bf Hasen's inductive mean} in JB-algebras is defined by 
\begin{align*}
H_n(\omega; \Adb)=H_{n-1}\left(\hat{\omega}; H_2(1-\omega_n, \omega_n; A_1, A_n),\cdots,H_2(1-\omega_n, \omega_n; A_{n-1}, A_n)\right)	
\end{align*}
for $n\geq 3,$ and where $\displaystyle \hat{\omega}=\frac{1}{1-\omega_n}(\omega_1,\cdots,\omega_{n-1})\in \Delta_{n-1}.$	
\end{definition}

\medskip

\begin{examples}
The weighted arithmetic mean and harmonic mean are two Hansen's inductive means.
\begin{enumerate} [{\rm(1)}]
	\item If $H_2$ is the weighted arithmetic mean $\Al_2,$ then $\displaystyle H_n(\omega; \Adb)=\Al_n(\omega; \Adb)=\sum_{k=1}^n \omega_k A_k.$
	\item If $H_2$ is the weighted arithmetic mean $\Hl_2,$ then $\displaystyle H_n(\omega; \Adb)=\Hl_n(\omega; \Adb)=\left(\sum_{k=1}^n \omega_k A_k^{-1}\right)^{-1}.$
\end{enumerate}
\end{examples}

\medskip

\begin{proposition} \label{prop:nHSgmtcmn}
The Hansen's inductive mean $\Gl^H_n$ induced by the weighted geometric mean $\#_{\lambda}$ in a unital JB-algebra $\Afk$ has the following properties:
\begin{enumerate}[{\rm (1)}]
\item \label{item:jithmg} {\rm (Joint homogeneity)} For positive real numbers 
\begin{align*}
\Gl_n^H(\omega;\alpha_1A_1,\cdots,\alpha_nA_n)=\left(\prod_{k=1}^n \alpha_k^{\omega_k}\right)\Gl_n^H(\omega;\Adb)
\end{align*}

\item \label{item:mntnct} {\rm (Monotonicity)} If $\displaystyle A_k\leq B_k$ for any $1\leq k\leq n,$ then $\displaystyle \Gl_n^H(\omega;\Adb)\leq \Gl_n^H(\omega;\Bdb)$

\item \label{item:idvdccvt} {\rm (Concavity)} $\displaystyle
\Gl_n^H(\omega; A_1,\cdots,A_n)$
 is concave with respect to $A_1, A_2,\cdots, A_n$ individually.
 
\item \label{item: cgrivrc}{\rm (Congruence invariance)} For any invertible element $C$ in JB-algebra,
\begin{align*}
\Gl_n^H\left(\omega;\{C\Adb C\}\right)=\left\{C\Gl_n^H(\omega;\Adb)C\right\}	
\end{align*}

\item \label{item:slfdalt} {\rm (Self duality)} $\displaystyle \Gl_n^H\left(\omega; \Adb^{-1}\right)^{-1}=\Gl_n^H\left(\omega;\Adb\right)$
\end{enumerate}
\end{proposition}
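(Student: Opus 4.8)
The plan is to argue by induction on $n$, with the base case $n=2$ supplied directly by the established properties of the two-variable weighted geometric mean $\#_{\lambda}$ from \cite{Wang2021_meanJB}: joint homogeneity, monotonicity, joint concavity, the congruence identity $\{C(A\#_{\lambda}B)C\}=\{CAC\}\#_{\lambda}\{CBC\}$ for invertible $C$, and self-duality $(A\#_{\lambda}B)^{-1}=A^{-1}\#_{\lambda}B^{-1}$. For the inductive step I would unfold the recursion $\Gl_n^H(\omega;\Adb)=\Gl_{n-1}^H(\hat{\omega};A_1\#_{\omega_n}A_n,\ldots,A_{n-1}\#_{\omega_n}A_n)$ and, in each of the five items, first push the relevant property through the inner two-variable means and then apply the inductive hypothesis to $\Gl_{n-1}^H$, always recalling that $\hat{\omega}_k=\omega_k/(1-\omega_n)$ and hence $\sum_{k<n}\omega_k=1-\omega_n$.

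Four of the five items are routine once the recursion is unfolded. For joint homogeneity, scaling $A_k\mapsto\alpha_kA_k$ turns the $k$-th inner mean into $\alpha_k^{1-\omega_n}\alpha_n^{\omega_n}(A_k\#_{\omega_n}A_n)$ by homogeneity of $\#_{\omega_n}$; the inductive homogeneity then extracts the scalar $\prod_{k=1}^{n-1}(\alpha_k^{1-\omega_n}\alpha_n^{\omega_n})^{\hat{\omega}_k}$, which collapses to $\prod_{k=1}^n\alpha_k^{\omega_k}$ after substituting $\hat{\omega}_k=\omega_k/(1-\omega_n)$ and using $\sum_{k<n}\omega_k=1-\omega_n$. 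Monotonicity follows because each inner mean is monotone in its two arguments and $\Gl_{n-1}^H$ is monotone by hypothesis. Congruence invariance follows from $\{CA_kC\}\#_{\omega_n}\{CA_nC\}=\{C(A_k\#_{\omega_n}A_n)C\}$ together with the inductive congruence identity. Self-duality follows from $(A_k\#_{\omega_n}A_n)^{-1}=A_k^{-1}\#_{\omega_n}A_n^{-1}$ applied to each slot, followed by the inductive self-duality applied to the tuple $(A_k\#_{\omega_n}A_n)_{k<n}$.

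The main obstacle is concavity. Individual concavity in a variable $A_j$ with $j<n$ is easy: only the single inner mean $A_j\#_{\omega_n}A_n$ depends on $A_j$, and composing the concave map $A_j\mapsto A_j\#_{\omega_n}A_n$ with the monotone and concave map $\Gl_{n-1}^H$ preserves concavity. The trouble is concavity in the last variable $A_n$, which enters \emph{every} inner mean: after using concavity of $A_n\mapsto A_k\#_{\omega_n}A_n$ and monotonicity of $\Gl_{n-1}^H$, one is left needing the \emph{joint} concavity of $\Gl_{n-1}^H$, which the individual-concavity hypothesis does not provide. I would therefore strengthen the induction and prove joint concavity of $\Gl_n^H$ outright, from which individual concavity (including the last-variable case) is an immediate specialization. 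Concretely, for midpoints one invokes joint concavity of $\#_{\omega_n}$ to get $\tfrac12(A_k^{(0)}+A_k^{(1)})\#_{\omega_n}\tfrac12(A_n^{(0)}+A_n^{(1)})\geq\tfrac12\big(A_k^{(0)}\#_{\omega_n}A_n^{(0)}+A_k^{(1)}\#_{\omega_n}A_n^{(1)}\big)$, then applies monotonicity of $\Gl_{n-1}^H$, and finally the inductive joint concavity of $\Gl_{n-1}^H$ to close the induction; the combination of monotonicity with joint concavity at each level is exactly what makes the composition behave correctly.
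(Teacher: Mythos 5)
Your proposal follows the same route as the paper's proof: induction on $n$ through the recursion $\Gl_n^H(\omega;\Adb)=\Gl_{n-1}^H(\hat{\omega};A_1\#_{\omega_n}A_n,\ldots,A_{n-1}\#_{\omega_n}A_n)$, with the base case supplied by the two-variable properties of $\#_{\lambda}$ from \cite{Wang2021_meanJB}; your joint-homogeneity computation is essentially the one written out in the paper, and your treatment of monotonicity, congruence invariance, and self-duality matches the paper's one-line appeal to \cite[Proposition 6(ii), (iv), (v)]{Wang2021_meanJB} plus induction.

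The one place where you go beyond the paper is concavity, and your observation there is correct and worth keeping. The paper dismisses item (3) with the same single sentence as the other items, but, as you note, the last variable $A_n$ enters every inner mean $A_k\#_{\omega_n}A_n$, so after pushing a midpoint through the concavity of each $\#_{\omega_n}$ and the monotonicity of $\Gl_{n-1}^H$ one still needs $\Gl_{n-1}^H$ to be concave \emph{jointly} in its $n-1$ slots, which an inductive hypothesis of slotwise concavity does not supply. Strengthening the induction to joint concavity of $\Gl_n^H$ (using joint concavity of the two-variable geometric mean as the base case, which is what \cite[Proposition 6(iii)]{Wang2021_meanJB} provides) is exactly the right fix, and individual concavity as stated in the proposition then follows by specialization. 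In short: same architecture as the paper, but your version makes explicit the one step the paper's terse justification leaves implicit.
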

\begin{proof}
We will prove \cref{item:jithmg} by induction. For $n=2,$ it is \cite[Proposition 6(i)]{Wang2021_meanJB}. For $n=3,$
\begin{align*}
\Gl_3^H(\omega; A_1, A_2, A_3)&=\Gl_2^H(\hat{\omega}; A_1\#_{\omega_3}A_3	, A_2\#_{\omega_3}A_3)=(A_1\#_{\omega_3}A_3)	\#_{\frac{\omega_2}{\omega_1+\omega_2}}(A_2\#_{\omega_3}A_3)
\end{align*}
Thus,
\begin{align*}
\Gl_3^H(\omega; \alpha_1 A_1, \alpha_2 A_2, \alpha_3 A_3)&=\left[(\alpha_1 A_1)\#_{\omega_3}(\alpha_3 A_3)\right]	\#_{\frac{\omega_2}{\omega_1+\omega_2}}\left[(\alpha_2 A_2)\#_{\omega_3}(\alpha_3A_3)\right]\\
&=\left[\alpha_1^{1-\omega_3}\alpha_3^{\omega_3}(A_1\#_{\omega_3}A_3)\right]	\#_{\frac{\omega_2}{\omega_1+\omega_2}}\left[\alpha_2^{1-\omega_3}\alpha_3^{\omega_3}(A_2\#_{\omega_3}A_3)\right]\\
&=\left(\alpha_1^{1-\omega_3}\alpha_3^{\omega_3}\right)^{\frac{\omega_1}{\omega_1+\omega_2}}\left(\alpha_2^{1-\omega_3}\alpha_3^{\omega_3}\right)^{\frac{\omega_2}{\omega_1+\omega_2}}(A_1\#_{\omega_3}A_3)	\#_{\frac{\omega_2}{\omega_1+\omega_2}}(A_2\#_{\omega_3}A_3)\\
&=\alpha_1^{\omega_1}\alpha_2^{\omega_2}\alpha_3^{\omega_3}(A_1\#_{\omega_3}A_3)	\#_{\frac{\omega_2}{\omega_1+\omega_2}}(A_2\#_{\omega_3}A_3)\\
&=\alpha_1^{\omega_1}\alpha_2^{\omega_2}\alpha_3^{\omega_3}	\Gl_3^H(\omega; A_1, A_2, A_3)
\end{align*}

Suppose joint homogeneity holds for any $2\leq m\leq n-1.$ For $m=n,$
\begin{align*}
\Gl_n^H(\omega; \alpha_1A_1,\cdots,\alpha_nA_n)&=\Gl_{n-1}^H\left(\hat{\omega}; (\alpha_1 A_1)\#_{\omega_n}(\alpha_n A_n),\cdots,(\alpha_{n-1} A_{n-1})\#_{\omega_n}(\alpha_n A_n)\right)\\
&=\Gl_{n-1}^H\left(\hat{\omega}; \alpha_1^{1-\omega_n} \alpha_n^{\omega_n}(A_1\#_{\omega_n} A_n),\cdots,\alpha_{n-1}^{1-\omega_n} \alpha_n^{\omega_n}(A_{n-1}\#_{\omega_n} A_n)\right)\\
&=\left(\prod_{k=1}^{n-1} \alpha_k^{1-\omega_n}\alpha_n^{\omega_n}\right)^{\frac{\omega_k}{1-\omega_n}}\Gl_{n-1}^H\left(\hat{\omega}; A_1\#_{\omega_n}A_n,\cdots, A_{n-1}\#_{\omega_n}A_n\right)\\
&=\left(\prod_{k=1}^n \alpha_k^{\omega_k}\right)\Gl_n^H(\omega;\Adb)	
\end{align*}

By induction, \cite[Proposition 6(ii), (iii), (iv), (v)]{Wang2021_meanJB} imply \cref{item:mntnct}, \cref{item:idvdccvt}, \cref{item: cgrivrc} and \cref{item:slfdalt} respectively.
\end{proof}

\medskip

The next Theorem generalizes \cite[Theorem 3]{Wang2021_meanJB} for an arbitrary finite number of positive invertible elements in a unital JB-algebra $\Afk$.

\medskip

\begin{theorem}\label{thm:gYieqlt}{\rm (Generalized Young inequalities for JB-algebras)}
Let $\displaystyle \omega=(\omega_1, \omega_2, \cdots, \omega)\in \Delta_n$ and $\Adb=(A_1, A_2,\cdots, A_n)\in {\mathfrak A}_{++}^n.$
\begin{align} \label{eqt:gYie}
\left(\sum_{k=1}^n \omega_k A_k^{-1}\right)^{-1}\leq \Gl_n^H\left(\omega,\Adb\right)\leq \sum_{k=1}^n \omega_k A_k.	
\end{align}
	
\end{theorem}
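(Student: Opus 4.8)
The plan is to prove the two inequalities in \cref{eqt:gYie} by induction on $n$, using the definition of Hansen's inductive mean $\Gl_n^H$ together with the two-variable generalized Young inequality \cite[Theorem 3]{Wang2021_meanJB}, which states that for positive invertible $A, B$ and $\lambda \in (0,1)$ one has
\begin{align*}
A!_{\lambda} B \leq A\#_{\lambda} B \leq A\triangledown_{\lambda} B,
\end{align*}
i.e. $\left((1-\lambda)A^{-1}+\lambda B^{-1}\right)^{-1}\leq A\#_{\lambda}B\leq (1-\lambda)A+\lambda B.$ The base case $n=2$ is exactly this statement. For the inductive step, I would assume the inequality holds for all indices up to $n-1$ and unwind one layer of the recursion in \Cref{def:Hasenim}.

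First I would handle the upper bound. Fixing $\hat{\omega}=\frac{1}{1-\omega_n}(\omega_1,\cdots,\omega_{n-1})$, the definition gives
\begin{align*}
\Gl_n^H(\omega;\Adb)=\Gl_{n-1}^H\left(\hat{\omega};A_1\#_{\omega_n}A_n,\cdots,A_{n-1}\#_{\omega_n}A_n\right).
\end{align*}
By the $n=2$ Young inequality, each entry satisfies $A_k\#_{\omega_n}A_n\leq (1-\omega_n)A_k+\omega_n A_n$. Using the monotonicity of $\Gl_{n-1}^H$ (\cref{item:mntnct} of \Cref{prop:nHSgmtcmn}) followed by the inductive hypothesis applied to the upper bound, I would obtain
\begin{align*}
\Gl_n^H(\omega;\Adb)\leq \Gl_{n-1}^H\left(\hat{\omega};(1-\omega_n)A_1+\omega_nA_n,\cdots\right)\leq \sum_{k=1}^{n-1}\frac{\omega_k}{1-\omega_n}\left[(1-\omega_n)A_k+\omega_nA_n\right].
\end{align*}
A direct computation, using $\sum_{k=1}^{n-1}\frac{\omega_k}{1-\omega_n}=1$ and $\sum_{k=1}^{n-1}\omega_k=1-\omega_n$, collapses the right-hand side to $\sum_{k=1}^{n}\omega_kA_k$, giving the upper bound. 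The lower bound is entirely dual: by \cref{item:slfdalt} (self duality), $\Gl_n^H(\omega;\Adb)^{-1}=\Gl_n^H(\omega;\Adb^{-1})$, so applying the already-established upper bound to $\Adb^{-1}$ yields $\Gl_n^H(\omega;\Adb)^{-1}\leq \sum_{k=1}^n\omega_kA_k^{-1}$, and inverting (order-reversing on the positive cone) produces the harmonic lower bound.

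The main obstacle I anticipate is the monotonicity step: to pass the entrywise inequality $A_k\#_{\omega_n}A_n\leq (1-\omega_n)A_k+\omega_nA_n$ through $\Gl_{n-1}^H$, I must know that Hansen's inductive geometric mean is monotone in each argument, which is exactly \cref{item:mntnct}, but I should be careful that monotonicity is being applied simultaneously in all $n-1$ slots rather than one at a time; this is legitimate since monotonicity in each variable individually implies joint monotonicity by composing one-variable replacements. A secondary subtlety is the inversion argument for the lower bound, where I must invoke that $X\mapsto X^{-1}$ reverses the order on $\Afk_{++}$; this is standard in JB-algebras and can be cited from \cite{Wang2021_meanJB}. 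Once these two monotonicity facts are in place, the weight bookkeeping is routine and the induction closes.
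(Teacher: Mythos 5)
Your proposal is correct and follows essentially the same route as the paper: induction on $n$ with the two-variable Young inequality \cite[Theorem 3]{Wang2021_meanJB} as base case, unwinding one layer of the Hansen recursion for the upper bound, and self-duality plus order reversal of inversion for the lower bound. The only cosmetic difference is that you push the entrywise bound $A_k\#_{\omega_n}A_n\leq(1-\omega_n)A_k+\omega_nA_n$ through $\Gl_{n-1}^H$ via its monotonicity before invoking the inductive hypothesis, whereas the paper applies the inductive hypothesis first and then bounds each term inside the resulting arithmetic mean, which avoids needing the monotonicity of $\Gl_{n-1}^H$ altogether.
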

\begin{proof}
For $n=2,$ it is \cite[Theorem 3]{Wang2021_meanJB}. For $n=3,$  by \cite[Theorem 3]{Wang2021_meanJB}
\begin{align*}
\Gl_3^H(\omega; A_1, A_2, A_3)&=(A_1\#_{\omega_3}A_3)	\#_{\frac{\omega_2}{\omega_1+\omega_2}}(A_2\#_{\omega_3}A_3)\\
&\leq \frac{\omega_1}{\omega_1+\omega_2}\left[(1-\omega_3)A_1+\omega_3 A_3\right]+\frac{\omega_2}{\omega_1+\omega_2}\left[(1-\omega_3)A_2+\omega_3 A_3\right]\\
&=  \omega_1A_1+\omega_2A_2+\omega_3A_3
\end{align*}
By \Cref{prop:nHSgmtcmn} \cref{item:slfdalt},
\begin{align*}
\Gl_3^H(\omega; A_1, A_2, A_3)=	\Gl_3^H(\omega; A_1^{-1}, A_2^{-1}, A_3^{-1})^{-1}\geq \left(\omega_1A_1^{-1}+\omega_2A_2^{-1}+\omega_3A_3^{-1}\right)^{-1}.
\end{align*}
Thus, \Cref{eqt:gYie} hold for $n=3.$
Suppose \Cref{eqt:gYie} hold for any $2\leq m\leq n-1.$ For $m=n,$
by induction,
\begin{align}\label{eqt:Hsgmatmtgmt}
\Gl_n^H(\omega; \Adb)&=\Gl_{n-1}^H(\hat{\omega}; A_1\#_{\omega_n}A_n,\cdots,  A_{n-1}\#_{\omega_n}A_n)\nonumber\\
&\leq \sum_{k=1}^{n-1} \frac{\omega_k}{1-\omega_n}\left[(1-\omega_n)A_k+\omega_n A_n\right]\nonumber\\
&=\sum_{k=1}^n \omega_kA_k.	
\end{align}
Applying \Cref{prop:nHSgmtcmn} \cref{item:slfdalt} to \Cref{eqt:Hsgmatmtgmt}, we know that
\begin{align*}
\Gl_n^H(\omega; \Adb)\geq \left(\sum_{k=1}^n \omega_kA_k^{-1}\right)^{-1}.	
\end{align*}
\end{proof}
\medskip

The following result extends Theorem 3.5 from \cite{Hwang2017_LTrtmn} to the setting of JB-algebras.

\medskip

\begin{theorem}\label{thm:HsmLTrtm}
Suppose a weighted 2-mean $H_2$ is a Lie-Trotter mean. Then the associated Hansen's inductive mean $H_n$ for $n\geq 2$ is a multivariate Lie-Trotter mean.	
\end{theorem}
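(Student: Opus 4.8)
The plan is to argue by induction on $n$, in close parallel with the proof of \Cref{thm:STmnLTrt} for the Sagae-Tanabe mean. The base case $n=2$ holds by hypothesis, since $H_2$ is assumed to be a two-variable Lie-Trotter mean. For the inductive step I assume that the Hansen inductive mean $H_{n-1}$ is a multivariate Lie-Trotter mean, and aim to verify the defining limit \cref{eqt:gnrlzLTrt} for $H_n$. Fix differentiable curves $\gamma_1,\dots,\gamma_n:(-\varepsilon,\varepsilon)\to{\mathfrak A}_{++}$ with $\gamma_k(0)=I$ for all $k$.

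The crucial new feature, compared with the Sagae-Tanabe argument where only a single auxiliary curve appears, is that here I must introduce $n-1$ auxiliary curves, one for each of the pairwise means that feed into $H_{n-1}$. For each $1\le k\le n-1$ I set
\[
\delta_k(t):=H_2(1-\omega_n,\omega_n;\gamma_k(t),\gamma_n(t)).
\]
Because $H_2$ is a weighted $2$-mean, $\delta_k(0)=H_2(1-\omega_n,\omega_n;I,I)=I$; and because $H_2$ is differentiable (being a two-variable Lie-Trotter mean in the sense of \Cref{def:2lietrtmn}), each $\delta_k$ is a differentiable $I$-based curve, so the induction hypothesis will be applicable to the tuple $(\delta_1,\dots,\delta_{n-1})$.

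The heart of the argument is computing $\delta_k'(0)$. Applying \Cref{prpst:dfrttrtfml} to $\delta_k$ gives $\exp(\delta_k'(0))=\lim_{t\to 0}\delta_k(t)^{1/t}$; on the other hand, the two-variable Lie-Trotter property of $H_2$ from \Cref{def:2lietrtmn} identifies this same limit as $\exp[(1-\omega_n)\gamma_k'(0)+\omega_n\gamma_n'(0)]$. Since $\exp$ is a diffeomorphism, and in particular injective, I conclude that $\delta_k'(0)=(1-\omega_n)\gamma_k'(0)+\omega_n\gamma_n'(0)$. With this in hand I invoke \Cref{def:Hasenim} to rewrite $H_n(\omega;\gamma_1(t),\dots,\gamma_n(t))=H_{n-1}(\hat\omega;\delta_1(t),\dots,\delta_{n-1}(t))$ with $\hat\omega_k=\omega_k/(1-\omega_n)$, and apply the induction hypothesis to obtain
\[
\lim_{t\to 0}H_n(\omega;\gamma_1(t),\dots,\gamma_n(t))^{1/t}=\exp\left(\sum_{k=1}^{n-1}\frac{\omega_k}{1-\omega_n}\,\delta_k'(0)\right).
\]
Substituting the value of $\delta_k'(0)$ and using the normalization $\sum_{k=1}^{n-1}\omega_k=1-\omega_n$ then collapses the exponent to $\sum_{k=1}^{n}\omega_k\gamma_k'(0)$, which is exactly \cref{eqt:gnrlzLTrt}.

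I expect the only delicate point to be the identification of $\delta_k'(0)$: one must check that each $\delta_k$ is genuinely a differentiable curve through $I$ so that \Cref{prpst:dfrttrtfml} applies, and then combine that identity with the two-variable formula and the injectivity of $\exp$ to read off the derivative. Everything else is bookkeeping, the one essential arithmetic identity being $\sum_{k=1}^{n-1}\omega_k=1-\omega_n$, which is precisely what makes the renormalized weights $\hat\omega_k$ reassemble into the original weights $\omega_k$ in the limit.
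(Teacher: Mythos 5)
Your proposal is correct and follows essentially the same route as the paper: induction on $n$, introduction of the $n-1$ auxiliary curves $\beta_k(t)=H_2(1-\omega_n,\omega_n;\gamma_k(t),\gamma_n(t))$, identification of $\beta_k'(0)=(1-\omega_n)\gamma_k'(0)+\omega_n\gamma_n'(0)$ via \Cref{prpst:dfrttrtfml} together with the two-variable Lie-Trotter property of $H_2$, and then the induction hypothesis applied to $H_{n-1}$ with the renormalized weights. Your phrasing via injectivity of $\exp$ is just the paper's logarithm computation read in the other direction, so there is no substantive difference.
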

\begin{proof}
Let $\omega=(\omega_1,\cdots,\omega_{n-1},\omega_n)\in \Delta_n$ and let $\gamma_1,\cdots,\gamma_n:(-\varepsilon, \varepsilon)\to \Afk_{++}$ be any differentiable curves with $\gamma_k(0)=I$ for any $1\leq k\leq n.$ We need to show that 
\begin{align*}
\lim\limits_{t\to 0}H_n(\omega; \gamma_1(t),\cdots,\gamma_n(t))^{1/t}=\exp\left[\sum_{k=1}^n \omega_k \gamma_k'(0)\right]	
\end{align*}

We will use induction to prove the statement. It is obvious for $n=2$ by our assumption. Suppose that the claim is true for $n-1.$ Then
\begin{align*}
H_n(\omega; \gamma_1(t),\cdots,\gamma_n(t))=H_{n-1}(\hat{\omega}; \beta_1(t),\cdots,\beta_{n-1}(t)),	
\end{align*}
where 
$$\beta_k(t)=H_2(1-\omega_n, \omega_n; \gamma_k(t),\gamma_n(t))\,\, \mbox{for}\, 1\leq k\leq n-1.$$
We observe that $\beta_k(t)$ is a differentiable curve and $\beta_k(0)=I$ by idempotence of $H_2.$ 
In addition, by \Cref{prpst:dfrttrtfml},
\begin{align*}
\beta_k'(0)=(\log\circ \beta_k)'(0)=\lim\limits_{t\to 0}\frac{\log(\beta_k(t))}{t}=\lim\limits_{t\to 0}\log(\beta_k(t))^{1/t}=(1-\omega_n)\gamma_k'(0)+\omega_n \gamma'_n(0).	
\end{align*}
Therefore 
\begin{align*}
\lim\limits_{t\to 0}H_n(\omega; \gamma_1(t),\cdots,\gamma_n(t))^{1/t}&=\lim\limits_{t\to 0}H_{n-1}(\hat{\omega}; \beta_1(t),\cdots,\beta_{n-1}(t))^{1/t}=\exp\left(\sum_{k=1}^{n-1} \frac{\omega_k}{1-\omega_n}\beta_k'(0)\right)\\
&=\exp\left\{\sum_{k=1}^{n-1} \frac{\omega_k}{1-\omega_n}\left[(1-\omega_n)\gamma_k'(0)+\omega_n \gamma'_n(0)\right]\right\}\\
&=\exp\left(\sum_{k=1}^{n} \omega_k \gamma_k'(0)\right)	
\end{align*}
\end{proof}
\section{A characterization of multivariate Lie-Trotter means}\label{sct:cLTtmn}
In this section, we give a characterization of multivariate Lie-Trotter means in a unital JB-algebra $\Afk$.

Let $G_n^{\omega}:=G_n(\omega;\cdot): \Afk_{++}^n\to \Afk_{++}$	be a weighted $n$-mean satisfying the following inequality 
\begin{equation}\label{eqt:hwa}
\Hl_n\leq G_n^{\omega}\leq \Al_n, 	
\end{equation}
where $\Hl_n$ and $\Al_n$ are harmonic and arithmetic means correspondingly. 

Let $A_1,\cdots, A_n\in \Afk.$ Without loss of generality, we suppose there exists at least a nonzero $A_k.$ Denote 
\begin{align*}
\rho:=\max\limits_{1\leq k\leq n}\sigma(A_k),	
\end{align*}
where  $\sigma(A_k)$ represents the spectral radius of $A_k.$ Then $\rho>0.$ For any $t\in (-1/\rho, 1/\rho),$ let $\lambda\in \Sp{I+tA_k}.$ By spectral theory of JB-algebra
\begin{align*}
\lambda>1-\abs{t}\cdot\rho>0	.
\end{align*}
Therefore, 
$G_n^{\omega}(I+tA_1,\cdots,I+tA_n)$ is well defined on the interval $(-1/\rho, 1/\rho).$
 
\medskip

\begin{lemma}
If $G_n^{\omega}$ as defined above satisfies the inequality $\Cref{eqt:hwa},$
then $G_n^{\omega}$ is differentiable at $\Idb=(I,\cdots, I)$ in the sense that
\begin{align*}
\lim\limits_{t\to 0}	\frac{G_n^{\omega}(I+tA_1,\cdots,I+tA_n)-G_n^{\omega}(I,\cdots,I)}{t}
\end{align*}
exists. Moreover, if we denote the limit as $DG_n^{\omega}(\Idb)(A_1,\cdots, A_n),$ then 
$$DG_n^{\omega}(\Idb)(A_1,\cdots, A_n)=\sum_{k=1}^n\omega_k A_k.$$
\end{lemma}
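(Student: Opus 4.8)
The plan is to use the squeeze provided by the arithmetic-harmonic bounds in $\Cref{eqt:hwa}$ to pin down both the existence and the value of the directional derivative. First I would evaluate the hypothesis at the perturbed tuple $(I+tA_1,\dots,I+tA_n)$. Since $G_n^{\omega}(I,\dots,I)=I$ by the weighted-mean property, the difference quotient becomes
\begin{align*}
Q(t):=\frac{G_n^{\omega}(I+tA_1,\dots,I+tA_n)-I}{t}.
\end{align*}
The inequality $\Hl_n\le G_n^{\omega}\le \Al_n$ evaluated at these arguments then traps $G_n^{\omega}(I+tA_1,\dots,I+tA_n)$ between $\Hl_n(\omega;I+tA_1,\dots,I+tA_n)$ and $\Al_n(\omega;I+tA_1,\dots,I+tA_n)$ in the order of the JB-algebra.

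Next I would compute the two bounding curves explicitly. The arithmetic side is immediate and linear in $t$:
\begin{align*}
\Al_n(\omega;I+tA_1,\dots,I+tA_n)=\sum_{k=1}^n\omega_k(I+tA_k)=I+t\sum_{k=1}^n\omega_k A_k,
\end{align*}
so the corresponding difference quotient equals $\sum_k\omega_k A_k$ exactly, with no $t$-dependence. The harmonic side requires a first-order expansion: writing $(I+tA_k)^{-1}=I-tA_k+o(t)$ via functional calculus (valid for small $t$ on the interval $(-1/\rho,1/\rho)$ where everything is positive invertible), I get $\sum_k\omega_k(I+tA_k)^{-1}=I-t\sum_k\omega_k A_k+o(t)$, and inverting once more gives $\Hl_n(\omega;\cdot)=I+t\sum_k\omega_k A_k+o(t)$. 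Hence the harmonic difference quotient also tends to $\sum_k\omega_k A_k$ as $t\to0$. Care is needed to handle the sign of $t$ correctly: dividing an order inequality by $t$ reverses it when $t<0$, so I would run the one-sided limits $t\to0^+$ and $t\to0^-$ separately, in each case sandwiching $Q(t)$ between the two bounding difference quotients (with the appropriate orientation) and invoking the order structure.

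The squeeze then forces $\lim_{t\to0}Q(t)=\sum_{k=1}^n\omega_k A_k$, which simultaneously establishes existence of the limit and identifies it as $DG_n^{\omega}(\Idb)(A_1,\dots,A_n)=\sum_k\omega_k A_k$. I expect the main obstacle to be making the sandwiching rigorous in the noncommutative, non-associative JB-setting: an order inequality $X\le Y$ in $\Afk$ does not by itself control norm convergence of difference quotients, so I would need to convert the two-sided order bounds into a genuine limit. The cleanest route is to subtract the common linear term $t\sum_k\omega_k A_k$ from all three quantities, observe that the upper bound (arithmetic) contributes exactly zero remainder while the lower bound (harmonic) contributes a remainder that is $o(t)$ in norm by the first-order Taylor expansion of the inverse, and then conclude that the middle remainder $G_n^{\omega}(I+tA_1,\dots,I+tA_n)-I-t\sum_k\omega_k A_k$ is squeezed between $0$ and an $o(t)$ term in the cone order. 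Since the positive cone is closed and the bounding remainders converge to $0$ in norm after division by $t$, order-boundedness upgrades to the desired limit, completing the identification.
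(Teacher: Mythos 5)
Your proposal is correct and follows essentially the same route as the paper: squeeze $G_n^{\omega}$ between the harmonic and arithmetic means of the tuple $(I+tA_1,\dots,I+tA_n)$, show both bounding difference quotients tend to $\sum_{k=1}^n\omega_k A_k$, and conclude by the sandwich principle with the one-sided limits $t\to 0^{\pm}$ treated separately. The only differences are cosmetic: the paper computes the derivative of the harmonic bound via an algebraic identity for the inverse rather than a Neumann-series expansion, and you are somewhat more explicit about why order-sandwiching in the positive cone upgrades to norm convergence of the difference quotient, a point the paper leaves implicit in the phrase ``sandwich principle.''
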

\begin{proof}
Define $\displaystyle f(t)=\left[\sum_{k=1}^n\omega_k (I+tA_k)^{-1}\right]^{-1}$ on $(-1/\rho, 1/\rho)$.	
 Then $f(t): (-1/\rho, 1/\rho)\to \Afk_{++} $ is well-defined with $f(0)=I$ and differentiable at $t=0.$  Indeed,
\begin{align*}
f'(0)&=\lim_{t\to 0}	\frac{f(t)-f(0)}{t}
=\lim_{t\to 0}\frac{\left[\sum_{k=1}^n\omega_k (I+tA_k)^{-1}\right]^{-1}-I}{t}\\
&=\lim_{t\to 0}\frac{\left[\sum_{k=1}^n\omega_k (I+tA_k)^{-1}\right]^{-1}\circ\left[I-\sum_{k=1}^n\omega_k (I+tA_k)^{-1}\right] }{t}\\
&=\lim_{t\to 0}\frac{I-\sum_{k=1}^n\omega_k (I+tA_k)^{-1}}{t}
=\lim_{t\to 0}\sum_{k=1}^n\omega_k\frac{I-(I+tA_k)^{-1}}{t}\\
&=\lim_{t\to 0}\sum_{k=1}^n\omega_k\frac{(I+tA_k)^{-1}\circ(I+tA_k-I)}{t}
=\sum_{k=1}^n\omega_k A_k.
\end{align*}
If $t\in (0,1/\rho)$	and $t\to 0^+,$ then by the inequality \Cref{eqt:hwa}
\begin{align*}
\lim_{t\to 0^+}\frac{f(t)-I}{t}\leq \lim_{t\to 0^+}\frac{G_n^{\omega}(I+tA_1,\cdots,I+tA_n)-I}{t}\leq \lim_{t\to 0^+}\frac{\sum_{k=1}^n\omega_k(I+tA_k)-I}{t}	
\end{align*}
Thus, by sandwich principle
\begin{align*}
\lim_{t\to 0^+}\frac{G_n^{\omega}(I+tA_1,\cdots,I+tA_n)-G_n^{\omega}(I,\cdots,I)}{t}=\sum_{k=1}^n\omega_k A_k	
\end{align*}
Similarly,
\begin{align*}
\lim_{t\to 0^-}\frac{G_n^{\omega}(I+tA_1,\cdots,I+tA_n)-G_n^{\omega}(I,\cdots,I)}{t}=\sum_{k=1}^n\omega_k A_k	
\end{align*}
Therefore, $G_n^{\omega}$ is differentiable at $\Idb$ with $DG_n^{\omega}(A_1,\cdots,A_n)=\sum_{k=1}^n\omega_iA_k.$
\end{proof}

\medskip

\begin{theorem}
If a weighted $n$-mean $G_n$ in a unital JB-algebra $\Afk$ satisfies the inequality $\Cref{eqt:hwa}$, then $G_n$ is the multivariate Lie-Trotter mean.	
\end{theorem}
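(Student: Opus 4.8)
The plan is to reduce the Lie-Trotter identity \eqref{eqt:gnrlzLTrt} to the differentiability of the composite curve $\gamma(t):=G_n(\omega;\gamma_1(t),\cdots,\gamma_n(t))$ at the origin, and then invoke \Cref{prpst:dfrttrtfml}. Fix differentiable curves $\gamma_k:(-\varepsilon,\varepsilon)\to\Afk_{++}$ with $\gamma_k(0)=I$. Since $G_n$ is a weighted $n$-mean, $\gamma(0)=G_n(\omega;I,\cdots,I)=I$, and $\gamma(t)\in\Afk_{++}$ for all $t$, so $\gamma$ is a well-defined curve into $\Afk_{++}$ through $I$ and $\gamma(t)^{1/t}=\exp\bigl(t^{-1}\log\gamma(t)\bigr)$ makes sense. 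The entire content of the theorem is therefore to show that the difference quotient $\bigl(\gamma(t)-I\bigr)/t$ converges as $t\to 0$ to $\sum_{k=1}^n\omega_k\gamma_k'(0)$.

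First I would record that both bounding means, precomposed with the curves, are differentiable at $0$ with exactly this derivative. For the arithmetic mean this is immediate, since $\Al_n(\omega;\gamma_1(t),\cdots,\gamma_n(t))=\sum_k\omega_k\gamma_k(t)$ is linear in the $\gamma_k(t)$ and hence has derivative $\sum_k\omega_k\gamma_k'(0)$ at $0$. For the harmonic mean $\Hl_n(\omega;\gamma_1(t),\cdots,\gamma_n(t))=\bigl(\sum_k\omega_k\gamma_k(t)^{-1}\bigr)^{-1}$, I would use that Jordan inversion is differentiable on $\Afk_{++}$ with derivative $X\mapsto -X$ at $I$; applying this once to each inner factor $\gamma_k(t)^{-1}$ and once to the outer inverse (both at the base point $I$, since the inner sum equals $I$ at $t=0$) yields derivative $\sum_k\omega_k\gamma_k'(0)$ as well. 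This is precisely the computation $f'(0)=\sum_k\omega_k A_k$ carried out in the preceding lemma, now with the linear curves $I+tA_k$ replaced by the general $\gamma_k(t)$.

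Next I would run the squeeze argument of that lemma for these general curves. Plugging $\gamma_k(t)$ into the sandwich inequality \eqref{eqt:hwa} gives
$$\Hl_n(\omega;\gamma_1(t),\cdots,\gamma_n(t))\leq\gamma(t)\leq\Al_n(\omega;\gamma_1(t),\cdots,\gamma_n(t)).$$
Subtracting $I$ and dividing by $t$ — preserving the order for $t>0$ and reversing it for $t<0$ — traps each of the two one-sided difference quotients of $\gamma$ between quantities converging to $\sum_k\omega_k\gamma_k'(0)$. Because the positive cone of a JB-algebra is normal, i.e.\ $0\leq X\leq Y$ forces $\norm{X}\leq\norm{Y}$, the squeeze is legitimate in norm, and the two-sided limit $\gamma'(0)=\sum_k\omega_k\gamma_k'(0)$ follows. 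Finally, since $\gamma$ is differentiable at $0$ with $\gamma(0)=I$, the argument of \Cref{prpst:dfrttrtfml} gives $\lim_{t\to 0}\gamma(t)^{1/t}=e^{\gamma'(0)}=\exp\bigl(\sum_k\omega_k\gamma_k'(0)\bigr)$, which is exactly \eqref{eqt:gnrlzLTrt}; hence $G_n$ is a multivariate Lie-Trotter mean.

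The step I expect to be the most delicate is the passage to the one-sided difference quotients in an ordered Banach algebra: one must track the order reversal for $t<0$ and verify that the squeeze is valid, which rests on normality of the positive cone rather than on any algebraic identity. A secondary point requiring care is that \Cref{prpst:dfrttrtfml} is applied only through differentiability of $\gamma$ at the single point $t=0$ — which is all that its proof actually uses — since $G_n$ is not assumed to be differentiable as a map on $\Afk_{++}^n$.
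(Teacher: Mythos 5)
Your proof is correct and rests on the same core idea as the paper's: squeeze $G_n$ between $\Hl_n$ and $\Al_n$ via \Cref{eqt:hwa} and reduce to \Cref{prpst:dfrttrtfml}. The technical route differs in where the squeeze is performed. The paper first applies $\log$ (using operator monotonicity of the logarithm from \cite{Wang2021_meanJB}), sandwiches $\tfrac{1}{t}\log G_n(\omega;\gamma_1(t),\dots,\gamma_n(t))$ between the corresponding expressions for $\Hl_n$ and $\Al_n$, and then invokes \Cref{crlr:mLTrtmn} to identify the two outer limits; you instead squeeze the raw difference quotient $\bigl(\gamma(t)-I\bigr)/t$, obtain differentiability of the composite curve at $0$ with derivative $\sum_k\omega_k\gamma_k'(0)$, and only then apply \Cref{prpst:dfrttrtfml}. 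Your version has two small advantages: it makes explicit the normality of the positive cone ($0\leq X\leq Y\Rightarrow\norm{X}\leq\norm{Y}$), which is also needed, but left tacit, in the paper's order-theoretic squeeze after taking logarithms; and it extends the difference-quotient computation of the paper's preceding lemma from the linear curves $I+tA_k$ to arbitrary differentiable curves, so the lemma and the theorem are handled uniformly. What the paper's route buys in exchange is that it recycles already-established facts (\Cref{crlr:mLTrtmn} and monotonicity of $\log$) rather than redoing the derivative computation for $\Hl_n$ along general curves. Your observation that \Cref{prpst:dfrttrtfml} only requires differentiability of the curve at $t=0$ is correct and is exactly what makes the reduction legitimate, since $G_n$ is not assumed differentiable as a map on $\Afk_{++}^n$.
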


\begin{proof}
Let $\omega=(\omega_1,\cdots,\omega_{n-1},\omega_n)\in \Delta_n$ and let $\gamma_1,\cdots,\gamma_n:(-\varepsilon, \varepsilon)\to \Afk_{++}$ be any differentiable curves with $\gamma_k(0)=I$ for any $1\leq k\leq n.$ Then the inequality $\Cref{eqt:hwa}$ indicates that
\begin{align*}
	\left[\sum_{k=1}^n\omega_k \gamma_k(t)\right]^{-1}\leq G_n(\omega;\gamma_1(t),\cdots,\gamma_n(t))\leq \sum_{k=1}^n \omega_k\gamma_k(t).
\end{align*}
Since logarithmic function is operator monotone in JB-algebra by \cite[Proposition 5]{Wang2021_meanJB}, 
\begin{align*}
\log \left[\sum_{k=1}^n\omega_k \gamma_k(t)\right]^{-1}\leq \log \left[G_n(\omega;\gamma_1(t),\cdots,\gamma_n(t))\right]\leq \log\left[\sum_{k=1}^n \omega_k\gamma_k(t)	\right]
\end{align*}
For any $t\in (0, \varepsilon)$ and $t\to 0^+$,	 
\begin{align*}
\lim_{t\to 0^+}\log \left[\sum_{k=1}^n\omega_k \gamma_k(t)\right]^{-1/t}\leq\lim_{t\to 0^+} \log \left[G_n(\omega;\gamma_1(t),\cdots,\gamma_n(t))\right]^{1/t}\leq \lim_{t\to 0^+}\log\left[\sum_{k=1}^n \omega_k\gamma_k(t)	\right]^{1/t}
\end{align*}

Using the fact $\Al_n$ and $\Hl_n$ are multivariate Lie-Trotter means by \Cref{crlr:mLTrtmn}, we conclude 
$$\lim_{t\to 0^+} \log \left[G_n(\omega;\gamma_1(t),\cdots,\gamma_n(t))\right]^{1/t}=\sum_{k=1}^n \omega_k\gamma_k'(0).$$
Note that the map $\log: \Afk_{++}\to \Al$ is diffeomorphic, 
$$\lim_{t\to 0^+} \left[G_n(\omega;\gamma_1(t),\cdots,\gamma_n(t))\right]^{1/t}=\exp\left[\sum_{k=1}^n \omega_k\gamma_k'(0)\right].$$

Following a similar argument, we obtain 
\begin{align*}
\lim_{t\to 0^-} \left[G_n(\omega;\gamma_1(t),\cdots,\gamma_n(t))\right]^{1/t}=\exp\left[\sum_{k=1}^n \omega_k\gamma_k'(0)\right].	
\end{align*}
\end{proof}

\bigskip
\noindent {\bf Acknowledgement:}\  We extend our gratitude to Shuzhou Wang for his careful reading of the manuscript and valuable comments.


\addcontentsline{toc}{section}{References}

\def\polhk#1{\setbox0=\hbox{#1}{\ooalign{\hidewidth
  \lower1.5ex\hbox{`}\hidewidth\crcr\unhbox0}}}
  \def\polhk#1{\setbox0=\hbox{#1}{\ooalign{\hidewidth
  \lower1.5ex\hbox{`}\hidewidth\crcr\unhbox0}}}
  \def\polhk#1{\setbox0=\hbox{#1}{\ooalign{\hidewidth
  \lower1.5ex\hbox{`}\hidewidth\crcr\unhbox0}}}
  \def\polhk#1{\setbox0=\hbox{#1}{\ooalign{\hidewidth
  \lower1.5ex\hbox{`}\hidewidth\crcr\unhbox0}}}
  \def\polhk#1{\setbox0=\hbox{#1}{\ooalign{\hidewidth
  \lower1.5ex\hbox{`}\hidewidth\crcr\unhbox0}}}

\Addresses

\end{document}